\newif\ifdraft
\newenvironment{renumerate}{%
	\begin{enumerate}[label=(\roman{*}), ref=(\roman{*})]
}{%
	\end{enumerate}%
}
\newenvironment{aenumerate}{%
	\begin{enumerate}[label=(\alph{*}), ref=(\alph{*})]
}{%
	\end{enumerate}%
}
\definecolor{labelkey}{gray}{0.5}
\newlength{\myarrowsize} 
\newenvironment{diagram*}[2]{%
\[%
\begin{tikzpicture}[>=cmto,baseline=(current bounding box.center),%
	to/.style={->,font=\scriptsize,cap=round},%
	into/.style={cmhook->,font=\scriptsize,cap=round},%
	onto/.style={-cmonto,font=\scriptsize,cap=round},%
	math/.style={matrix of math nodes, row sep=#2, column sep=#1,%
		text height=1.5ex, text depth=0.25ex}]%
}{%
\end{tikzpicture}%
\]% 
\ignorespacesafterend%
}
\newcommand{\Dmod}{\mathscr{D}}
\newcommand{\Mmod}{\mathcal{M}}
\newcommand{\shT}{\mathscr{T}}
\newcommand{\cohH}{\mathcal{H}}
\newcommand{\shtor}{\shT\hspace{-2.5pt}\mathit{or}}
\newcommand{\ZZ}{\mathbb{Z}}
\newcommand{\QQ}{\mathbb{Q}}
\newcommand{\CC}{\mathbb{C}}
\newcommand{\HH}{\mathbb{H}}
\newcommand{\PP}{\mathbb{P}}
\DeclareMathOperator{\im}{im}
\DeclareMathOperator{\coker}{coker}
\DeclareMathOperator{\Supp}{Supp}
\DeclareMathOperator{\gr}{gr}
\DeclareMathOperator{\DR}{DR}
\newcommand{\shf}[1]{\mathscr{#1}}
\newcommand{\OX}{\shf{O}_X}
\newcommand{\restr}[1]{\big\vert_{#1}}
\def\overbar#1#2#3{{%
	\setbox0=\hbox{\displaystyle{#1}}%
	\dimen0=\wd0
	\advance\dimen0 by -#2 
	\vbox {\nointerlineskip \moveright #3 \vbox{\hrule height 0.3pt width \dimen0}%
		\nointerlineskip \vskip 1.5pt \box0}%
}}
\newcommand{\into}{\hookrightarrow}
\newcommand{\onto}{\to\hspace{-0.7em}\to}
\newcommand{\shO}{\shf{O}}
\let\@@seccntformat\@seccntformat
\renewcommand*{\@seccntformat}[1]{%
  \expandafter\ifx\csname @seccntformat@#1\endcsname\relax
    \expandafter\@@seccntformat
  \else
    \expandafter
      \csname @seccntformat@#1\expandafter\endcsname
  \fi
    {#1}%
}
\newcommand*{\@seccntformat@subsection}[1]{%
  \textbf{\csname the#1\endcsname.}
}
\let\@paragraph\paragraph
\renewcommand*{\paragraph}[1]{%
	\vspace{0.3\baselineskip}%
	\@paragraph{\textit{#1}}%
}
\newtheorem*{theorem*}{Theorem}
\newtheorem{lemma}[equation]{Lemma}
\newtheorem*{lemma*}{Lemma}
\newtheorem{corollary}[equation]{Corollary}
\newtheorem{proposition}[equation]{Proposition}
\newtheorem*{proposition*}{Proposition}
\theoremstyle{definition}
\newtheorem{definition}[equation]{Definition}
\newtheorem*{definition*}{Definition}
\newtheorem{remark}[equation]{Remark}
\newtheorem{example}[equation]{Example}
\newtheorem*{example*}{Example}
\newtheorem*{problem*}{Problem}
\theoremstyle{plain}
\newcommand{\theoremref}[1]{\hyperref[#1]{Theorem~\ref*{#1}}}
\newcommand{\lemmaref}[1]{\hyperref[#1]{Lemma~\ref*{#1}}}
\newcommand{\definitionref}[1]{\hyperref[#1]{Definition~\ref*{#1}}}
\newcommand{\propositionref}[1]{\hyperref[#1]{Proposition~\ref*{#1}}}
\newcommand{\conjectureref}[1]{\hyperref[#1]{Conjecture~\ref*{#1}}}
\newcommand{\corollaryref}[1]{\hyperref[#1]{Corollary~\ref*{#1}}}
\newcommand{\exampleref}[1]{\hyperref[#1]{Example~\ref*{#1}}}
\let\old@caption\caption
\renewcommand*{\caption}[1]{%
	\setcounter{figure}{\value{equation}}%
	\stepcounter{equation}%
	\old@caption{#1}\relax%
}
\newcounter{intro}
\newtheorem{intro-conjecture}[intro]{Conjecture}
\newtheorem{intro-corollary}[intro]{Corollary}
\newtheorem{intro-theorem}[intro]{Theorem}
\newcommand{\omY}{\omega_Y}
\newcommand{\OY}{\shO_Y}
\newcommand{\parref}[1]{\hyperref[#1]{\S\ref*{#1}}}
\newcommand*\if@single[3]{%
  \setbox0\hbox{${\mathaccent"0362{#1}}^H$}%
  \setbox2\hbox{${\mathaccent"0362{\kern0pt#1}}^H$}%
  \ifdim\ht0=\ht2 #3\else #2\fi
  }
\newcommand*\rel@kern[1]{\kern#1\dimexpr\macc@kerna}
\newcommand*\widebar[1]{\@ifnextchar^{{\wide@bar{#1}{0}}}{\wide@bar{#1}{1}}}
\newcommand*\wide@bar[2]{\if@single{#1}{\wide@bar@{#1}{#2}{1}}{\wide@bar@{#1}{#2}{2}}}
\newcommand*\wide@bar@[3]{%
  \begingroup
  \def\mathaccent##1##2{%
%If there's more than a single symbol, use the first character instead (see below):
    \if#32 \let\macc@nucleus\first@char \fi
%Determine the italic correction:
    \setbox\z@\hbox{$\macc@style{\macc@nucleus}_{}$}%
    \setbox\tw@\hbox{$\macc@style{\macc@nucleus}{}_{}$}%
    \dimen@\wd\tw@
    \advance\dimen@-\wd\z@
%Now \dimen@ is the italic correction of the symbol.
    \divide\dimen@ 3
    \@tempdima\wd\tw@
    \advance\@tempdima-\scriptspace
%Now \@tempdima is the width of the symbol.
    \divide\@tempdima 10
    \advance\dimen@-\@tempdima
%Now \dimen@ = (italic correction / 3) - (Breite / 10)
    \ifdim\dimen@>\z@ \dimen@0pt\fi
%The bar will be shortened in the case \dimen@<0 !
    \rel@kern{0.6}\kern-\dimen@
    \if#31
      \overline{\rel@kern{-0.6}\kern\dimen@\macc@nucleus\rel@kern{0.4}\kern\dimen@}%
      \advance\dimen@0.4\dimexpr\macc@kerna
%Place the combined final kern (-\dimen@) if it is >0 or if a superscript follows:
      \let\final@kern#2%
      \ifdim\dimen@<\z@ \let\final@kern1\fi
      \if\final@kern1 \kern-\dimen@\fi
    \else
      \overline{\rel@kern{-0.6}\kern\dimen@#1}%
    \fi
  }%
  \macc@depth\@ne
  \let\math@bgroup\@empty \let\math@egroup\macc@set@skewchar
  \mathsurround\z@ \frozen@everymath{\mathgroup\macc@group\relax}%
  \macc@set@skewchar\relax
  \let\mathaccentV\macc@nested@a
%The following initialises \macc@kerna and calls \mathaccent:
  \if#31
    \macc@nested@a\relax111{#1}%
  \else
%If the argument consists of more than one symbol, and if the first token is
%a letter, use that letter for the computations:
    \def\gobble@till@marker##1\endmarker{}%
    \futurelet\first@char\gobble@till@marker#1\endmarker
    \ifcat\noexpand\first@char A\else
      \def\first@char{}%
    \fi
    \macc@nested@a\relax111{\first@char}%
  \fi
  \endgroup
}
\DeclareMathOperator{\adj}{adj}
\DeclareMathOperator{\Gr}{Gr}
\newcommand{\remarkref}[1]{\hyperref[#1]{Remark~\ref*{#1}}}
\begin{document}

\vspace{\baselineskip}

\title{Weighted multiplier ideals of reduced divisors}

\author[Sebasti\'an~Olano]{Sebasti\'an~Olano}
\address{Department of Mathematics, University of Michigan,
Ann Arbor, MI 48109, USA}
\email{{\tt olano@umich.edu}}

\thanks{}

%\subjclass[2010]{14J17, 14F17, 32S25, 32S35}

\begin{abstract} We use methods from birational geometry to study the Hodge and weight filtrations on the localization along a hypersurface. We focus on the lowest piece of the Hodge filtration of the submodules arising from the weight filtration. This leads to a sequence of ideal sheaves called weighted multiplier ideals. The last ideal of this sequence is a multiplier ideal (and a Hodge ideal), and we prove that the first is the adjoint ideal. We also study the local and global properties of weighted multiplier ideals and their applications to singularities of hypersurfaces of smooth varieties.
\end{abstract}

\maketitle

\section{Introduction}

Let $X$ be a smooth complex variety of dimension $n$. To an effective reduced divisor $D$ on $X$ one can associate a sequence of ideal sheaves $I_k(D)\subseteq \shO_X$, called the Hodge ideals of $D$ and studied in a series of papers \cite{hodgeideals},\cite{mustatapopa18},\cite{mustatapopa19}. They arise from the theory of M. Saito, which induces a Hodge filtration $F_{\bullet}\OX(*D)$ by coherent $\OX$-modules on $\OX(*D)$, the sheaf of functions with poles along $D$, seen as a left $\Dmod_X$-module. This $\Dmod$-module underlies the mixed Hodge module $j_*\QQ^H_{U}[n]$, where $j: U=X\smallsetminus D\into X$. Saito showed that the Hodge filtration is contained in the pole order filtration, that is,  $$F_k\OX(*D) \subseteq \OX((k+1)D)$$ for all $k\geq 0$. Using this, the Hodge ideal $I_k(D)$ is defined by $$F_k\OX(*D) = \OX((k+1)D) \otimes I_k(D).$$ The $\Dmod_X$-module $\OX(*D)$ is also endowed with a weight filtration $W_{\bullet}\OX(*D)$ by $\Dmod_X$-submodules. The Hodge filtration of these submodules satisfies $$F_kW_{n+l}\OX(*D) \subseteq F_k\OX(*D) \subseteq \OX((k+1)D),$$ and similarly we can define the weighted Hodge ideals by $$F_kW_{n+l}\OX(*D) = \OX((k+1)D) \otimes I^{W_l}_k(D).$$

The aim of this paper is to study the weighted Hodge ideals when $k=0$, using the methods of birational geometry and mixed Hodge modules. As the weight filtration is an increasing filtration, we have a chain of inclusions $$I^{W_0}_0(D)\subseteq I^{W_1}_0(D)\subseteq \cdots \subseteq I^{W_n}_0(D).$$ The last member in the chain is equal to the 0-th Hodge ideal, that is, $I^{W_n}_0(D)= I_0(D)$. This ideal is quite well understood; it is the multiplier ideal $$I_0(D) = \mathcal{J}(X, (1-\epsilon)D)$$ for $0<\epsilon\ll 1$ \cite{hodgeideals}*{Proposition 10.1}. The latter is a measure of log-canonicity, which means that using the above identification we obtain $I_0(D)= \OX$ if and only if the pair $(X,D)$ is log-canonical \cite{lazarsfeld2}*{Definition 9.3.9}. As the 0-th weighted Hodge ideals are contained in $I_0(D)$, by analogy we call them weighted multiplier ideals (ignoring the $(1-\epsilon)$ twist). This paper is the starting point of the project of studying weighted Hodge ideals; we pursue the general case in the sequel \cite{olano21b}.  \\

Weighted multiplier ideals can be computed using a log-resolution of singularities (see \propositionref{def}). Unlike $I_0(D)$, they are usually not defined as pushforwards of line bundles. The lowest non-trivial weight is $l=0$. For this value, the weighted multiplier ideal is the ideal of the divisor $D$: $$I_0^{W_0}(D) = \OX(-D).$$ The next weight is $l=1$, and the 1-st weighted multiplier ideal also admits an interpretation. 

\begin{intro-theorem}\label{thmadjoint} Let $X$ be a smooth variety and $D$ a reduced divisor. Then $$I_0^{W_1}(D) = \adj(D),$$ the adjoint ideal of $D$.  \end{intro-theorem}

The adjoint ideal is a measure of rationality of a singularity. More precisely, $\adj(D)= \OX$ if and only if $D$ is normal and has at most rational singularities \cite{lazarsfeld2}*{Proposition 9.3.48}. This ideal has been used in birational geometry, for instance, to study singularities of theta divisors \cite{einlazarsfeld97}. \theoremref{thmadjoint} gives a Hodge theoretic interpretation of the adjoint ideal. The proof of \theoremref{thmadjoint} relies on \propositionref{propositionadjoint}, and after the completion of this article, the author was informed that \propositionref{propositionadjoint} was already obtained in \cite{budursaito05}*{Proof of Proposition 3.5}. Together with \cite{budurthesis}*{\textsection 3.3}, this result had described an equivalent Hodge theoretic interpretation of the adjoint ideal to the one in \theoremref{thmadjoint}.\\

Weighted multiplier ideals form a sequence of ideals interpolating between the adjoint ideal and the multiplier ideal $I_0(D)$. In some sense, they measure and filter the ``distance'' between the pair $(X,D)$ having log-canonical singularities (equivalently $D$ being Du Bois  (e.g. \cite{kollarkovacs10})), and having canonical singularities (or equivalently $D$ having rational singularities).\\

Recall that weighted multiplier ideals satisfy $$I_0^{W_{l-1}}(D) \subseteq I_0^{W_{l}}(D).$$ The difference between these ideals is measured by a sheaf, denoted by $C_l$ (see \definitionref{differencesheaves}), that is supported on the singular locus of $D$ for $l\geq 2$. If $D$ has isolated singularities, these sheaves are skyscraper sheaves and we give a description of their length. For simplicity, assume $D$ has one isolated singularity $x\in D$, after possibly restricting to an open set. 

\begin{intro-theorem}\label{main} Let $g: \tilde{D}\to D$ be a log-resolution of singularities which is an isomorphism outside of $x$, and let $G$ be the exceptional divisor. For $l\geq 2$, $$\dim{(C_l)_x} = h^{0,n-l}(H^{n-2}(G)),$$  where $h^{0,n-l}(H^{n-2}(G))$ are the Hodge numbers of the mixed Hodge structure on $H^{n-2}(G)$. \end{intro-theorem}

In the case $l=n$, the dimension of $(C_n)_x$ can be identified with a Betti number of the dual complex $\Delta(G)$. The dual complex is a CW complex that can be defined for a variety with simple normal crossings (see for instance \cite{payne13}). 
\begin{intro-corollary}\label{corcn} Using the notation above, $$\dim{(C_n)_x} = h^{0,0}(H^{n-2}(G)) = \dim(W_0H^{n-2}(G)) = b_{n-2}(\Delta(G)).$$ \end{intro-corollary}
The last equality follows from an interpretation of the cohomology of the dual complex $\Delta(G)$ as the lowest weight of the cohomology of $G$ (see \cite{payne13}*{Section 3}).\\

The dimension of $(C_2)_x$ admits a simpler interpretation. Let $G = \cup{G_i}$ be the decomposition into irreducible components. Each $G_i$ is a smooth projective variety, endowed with the usual Hodge numbers.
\begin{intro-corollary}\label{corc2} Using the same notation, $$\dim{(C_2)_x} = h^{0,n-2}(H^{n-2}(G)) = \sum{h^{0,n-2}(G_i)}.$$ \end{intro-corollary}
The second equality follows from the description of the mixed Hodge structure on the cohomology of $G$ (see Section \ref{mhssnc}).\\

For example, when $X$ is a smooth threefold and $D$ a normal surface, $(C_3)_x$ and $(C_2)_x$ are the two skyscraper sheaves described by \theoremref{main}. We have a chain of ideals $$\adj(D) = I_0^{W_1}(D) \subseteq I_0^{W_2}(D) \subseteq I_0(D).$$ Using \corollaryref{corc2} we obtain that the first two ideals are equal if and only if all of the irreducible components of the exceptional divisor $G$ are rational curves. \corollaryref{corcn} says that the last two ideals coincide if and only if the dual complex $\Delta(G)$, which is a graph, has no cycles. The three ideals can be different, as seen in the example of $X = \mathbb{A}^3$, $D= ((x^3+y^3+z^3)(y^2z-x^3+4xz^2)+x^6yz = 0)$ and the singularity $x=(0,0,0)$ (see \exampleref{example}). In this case, $\dim{(C_2)_x} = 2$ and $\dim{(C_3)_x} = 8$. Note that this singularity is not weighted homogeneous. In such a case, that is, if $D$ has at most isolated weighted homogeneous singularities, there are at most two different weighted multiplier ideals (see \propositionref{whs}).\\

An immediate consequence of \theoremref{main} is that we can describe the difference between the adjoint ideal and the ideal $I_0(D)$ at a singular point $x\in D$ in terms of these dimensions: 
\begin{equation*} \sum_{l=2}^{n}{\dim{(C_{l})_x}} = \dim\Gr^0_F(H^{n-2}(G)) = h^{n-2}(G, \shO_G). \end{equation*}
This dimension is known, for instance, if the pair $(X,D)$ is log-canonical and $D$ does not have rational singularities. Ishii proved that under these assumptions $$h^{n-2}(G, \shO_G) = 1$$ \cite{ishii85}*{Proposition 3.7}. Thus, there exists a value $l$ such that $$\dim{(C_l)_x} = 1,$$ while the others are zero. A log-canonical singularity is called of type $(0, n-l)$ if $h^{0,n-l}(H^{n-2}(G))=1$ \cite{ishii85}*{Definition 4.1}, or equivalently if  $\dim{(C_l)_x} = 1$. For instance, if $X$ is a smooth threefold and $D$ a normal surface, a log-canonical singularity is of type $(0,1)$ if it is simple elliptic, and of type $(0,0)$ if it is a cusp singularity. \\ 

Weighted multiplier ideals also satisfy global results. When $X$ is a smooth projective variety, as a multiplier ideal the 0-th Hodge ideal satisfies the Nadel vanishing theorem (see \cite{lazarsfeld2}*{Theorem 9.4.8}). This theorem says that given an ample line bundle $L$, one has $$H^i(X, \omega_X(D) \otimes L\otimes I_0(D)) = 0$$ for $i\geq 1$. An easy application of a vanishing theorem of Saito \cite{saito90}*{Proposition 2.33} shows that the same holds for the weighted multiplier ideals (see \propositionref{kodairatypevanishing}). In the case when $D$ is an ample divisor, the vanishing still holds with $L=\shO_X$ under some restrictions. 

\begin{intro-theorem}\label{thmvanishingample} Let $X$ be a smooth projective variety and suppose $D$ is ample with at most isolated singularities. Then $$H^i(X, \omega_X(D)\otimes I_0^{W_l}(D)) = 0$$ for $i\geq 1$ and $l\geq 1$, except possibly if $l=1$, $i=1$ and $\dim{X}\geq 3$. 
\end{intro-theorem}
For an example where the vanishing does not hold when $l=1$ and $i=1$ see \remarkref{remarkvanishing}. \theoremref{thmvanishingample} follows from a general result that does not require $D$ to have isolated singularities (see \propositionref{vanishingample}).\\

The local and global results described above can be used to obtain results about the geometry of certain isolated singular points on hypersurfaces of $\PP^n$.
\begin{intro-corollary}\label{corind} Let $D\subseteq \PP^n$ be a hypersurface of degree $d$ with at most isolated singularities. Denote by $Z_l$ the scheme defined by $I_0^{W_l}(D)$. Then, $$H^0(\PP^n, \shO_{\PP^n}(k)) \onto H^0(\PP^n, \shO_{Z_l})$$ for $k\geq d-n-1$ if $l\geq 2$, and $k\geq d-n$ if $l=1$. \end{intro-corollary}
For instance, this gives a bound on the number of cusp singularities in a normal surface $D\subseteq \PP^3$, such that $(\PP^3, D)$ is log-canonical (see \corollaryref{proplcpoints}).\\

Finally, we study the behavior of weighted multiplier ideals of a pair $(X,D)$ and the pair $(H,D_H)$, where $H\subseteq X$ is a general hypersurface and $D_H$ is the restriction of $D$ to $H$. As $H$ is general, it is smooth and the ideal $I_0^{W_l}(D_H)$ is well defined. 
\begin{intro-theorem}\label{thmrestriction} Let $H\subseteq X$ be a general element of a base-point free linear system on $X$, and $D\restr{H} = D_H$.  For all $l\in \ZZ$, $$I_0^{W_l}(D)\cdot \shO_H = I_0^{W_l}(D_H).$$ \end{intro-theorem}

This result is the analogue of the Restriction Theorem for multiplier ideals when the hypersurface is general \cite{lazarsfeld2}*{Theorem 9.5.1}. For a more general statement, see \remarkref{remarkrestriction}. Using \theoremref{thmrestriction}, we obtain generic description of the sheaf $C_l$. More precisely, recall that the sheaves $C_l$ with $l\geq 2$ are supported on the singular locus of $D$. The rank of $C_l$ seen as a coherent sheaf over each of the irreducible components of $D_{\rm sing}$ is computed in terms of a resolution of singularities (see \propositionref{rankcl}).

\medskip

\noindent
{\bf Acknowledgements.}
I would like to thank Mihnea Popa for his constant support during the project, and Tommaso de Fernex, Lawrence Ein, Sándor Kovács, Mircea Musta\c{t}\u{a}, and Mingyi Zhang for very helpful discussions. Finally, I am thankful to the anonymous referee for valuable comments and especially for pointing out previously known results about the Hodge theoretic interpretation of the adjoint ideal.

\section{Preliminaries} 
\subsection{Mixed Hodge modules} In this section, we recall some facts about mixed Hodge modules and set up the notation we use throughout this paper.\\

Let $X$ be a smooth variety of dimension $n$. For a graded-polarizable mixed Hodge module $M$, we denote the underlying left regular holonomic $\Dmod_X$-module by $\Mmod$. Throughout this paper, we will only use left $\Dmod_X$-modules. We denote by $W_{\bullet}M$ the weight filtration on $M$ and by $$\gr^W_lM := W_lM/W_{l-1}M$$ the quotient, which is a polarizable Hodge module of weight $l$. We denote by $F_{\bullet}\Mmod$ the Hodge filtration. The de Rham complex is defined as: $$\DR(\Mmod) = \big[\Mmod\to\Omega^1_X\otimes_{\shO_X}\Mmod\to\cdots\to\omega_X\otimes_{\shO_X}\Mmod\big][n],$$ and the Hodge filtration of $\Mmod$ induces a filtration on this complex: $$F_p\DR(\Mmod) = \big[F_p\Mmod\to\Omega^1_X\otimes_{\shO_X}F_{p+1}\Mmod\to\cdots\to\omega_X\otimes_{\shO_X}F_{p+n}\Mmod\big][n].$$ The $p$-th subquotient of this filtration is the complex $$\gr^F_p\DR(\Mmod) = \big[\gr^F_p\Mmod\to\Omega^1_X\otimes_{\shO_X}\gr^F_{p+1}\Mmod\to\cdots\to\omega_X\otimes_{\shO_X}\gr^F_{p+n}\Mmod\big][n].$$ 

Let $D$ be a reduced effective divisor. The main mixed Hodge module we study in this paper is $j_*\QQ^H_{U}[n]$, where $j:U=X\smallsetminus D\into X$, whose underlying $\Dmod_X$-module is the sheaf of function with poles along $D$: $\shO_X(*D)$. To study $\shO_X(*D)$, we use a better understood mixed Hodge module, and the properties of pushforwards. For this, we fix a log-resolution of singularities of $(X,D)$, that is, a proper morphism $f:Y\to X$ such that $Y$ is smooth, it is an isomorphism over $U$, and $(f^*D)_{red} = E$ is a divisor with simple normal crossings. In this setup we have: \begin{equation}\label{pushforward}f_+\OY(*E) \cong H^0f_+\OY(*E) \cong \OX(*D)\end{equation} (see for example \cite{hodgeideals}*{Lemma 2.2}). As $E$ is a simple normal crossings divisor, the weight filtration of the $\Dmod_Y$-module $\OY(*E)$ can be described in terms of the strata. The lowest degree of the weight filtration is $n= \dim{Y}$, that is: $$W_{n-1}\OY(*E) = 0.$$ The lowest piece corresponds to the canonical Hodge module of $Y$: $$W_n\OY(*E) \cong \OY.$$ To describe the rest of the subquotients, we introduce the following very useful notation. Let $$E = \bigcup_{i\in I}E_i.$$ The variety $$E(l) = \bigsqcup_{\substack {J\subseteq I\\|J|=l}}E_J$$ is a smooth and possibly disconnected variety. If we denote $i_l: E(l) \to Y$ the map such that on each component is the inclusion, then \begin{equation}\label{isoks}\gr^W_{n+l}\OY(*E) \cong i_{l+}\shO_{E(l)}\end{equation} with a Tate twist (see \cite{ks21}*{Prop 9.2}). \\

In order to describe the weight filtration of a pushforward of a projective morphism, a useful tool is to use the spectral sequence associated to the weight filtration: \begin{equation}\label{ss1}E_1^{p,q}= H^{p+q}f_+(\gr^W_{-p}\OY(*E))\Rightarrow H^{p+q}f_+\OY(*E),\end{equation}  
which degenerates at $E_2$, and there is an isomorphism: $$E_2^{p,q}\cong \gr^W_{q}H^{p+q}f_+\OY(*E)$$ \cite{saito90}*{Proposition 2.15}.

\subsection{Mixed Hodge structure of a simple normal crossings variety}\label{mhssnc} The description of the mixed Hodge structure of a simple normal crossings variety is used in several instances throughout the paper. We state the complete description for the convenience of the reader.\\

Let $D$ be a simple normal crossings variety. We define the map $H^k\big(D(r)\big) \stackrel{\delta_r}{\longrightarrow} H^k\big(D(r+1)\big)$ in the following way: for a component $D_J$ of $D(r+1)$, we have the inclusions $\lambda_{i, r+1}: D_J \into D_{J\smallsetminus\{j_i\}}$, where $J= \{j_1,\ldots, j_{r+1}\}$ and $j_1<\cdots <j_{r+1}$. We obtain the map $$\lambda_{i, r+1}: D(r+1) \to D(r),$$ which is the corresponding inclusion on each component. The map $\delta_r$ is defined as the alternating sum of the pullbacks $$\delta_r := \sum_{i=1}^{r+1}{(-1)^{i+1}\lambda_{i,r+1}^*}$$ on $H^k\big(D(r))$. We obtain the complex $$ 0 \longrightarrow H^k\big(D(1)\big) \stackrel{\delta_1}{\longrightarrow} H^k\big(D(2)\big) \stackrel{\delta_2}{\longrightarrow} \cdots \stackrel{\delta_l}{\longrightarrow} 
H^k\big(D(l+1)\big) \stackrel{\delta_{l+1}}{\longrightarrow} \cdots,$$
in which all cohomologies have $\CC$-coefficients. The weight $k$ piece of the mixed Hodge structure on the cohomology of $D$ are the cohomologies of this complex. More precisely, we have  \begin{equation}\label{snchp}{\rm Gr}^W_k H^{k+l}(D) = \ker\delta_{l+1}/\im{\delta_l}.\end{equation}
The Hodge space $H^{p,q}\big({\rm Gr}^W_k H^{k+l}(D)\big)$ is obtained by applying first $H^{p,q}$ to the complex and then taking the cohomologies. The 0-th Hodge piece of the cohomologies has a description in terms of the cohomologies of the sheaf $\shO_D$, which says $$\Gr^F_0H^k(D) \cong H^k(D, \shO_D)$$ (see \cite{St83}*{(1.5)}).

%Using the spectral sequence for pushforwards and the weight filtration (put reference), we see that $\gr^W_{n+l}{a_k}$ is an isomorphism for $l\leq k-1$, it is the zero map for $l\geq k+1$, and in the remaining case, it is a surjective morphism whose kernel is given by the image of the map: \begin{equation}\label{kernelweights} H^{-1}f_+\gr^W_{n+k+1}\OY(*E) \to H^0f_+\gr^W_{n+k}\OY(*E) \end{equation} which after using the description of the subsequent quotients of the weight filtration, corresponds to: \begin{equation} H^{-1}f_+\shO_{E(k+1)} \to H^0f_+\shO_{E(k)}. \end{equation}

\section{Characterizations}

\subsection{Definition} In this section, we introduce the weighted multiplier ideals using the theory of mixed Hodge modules. \\

A fundamental result by Saito about the Hodge filtration on $\OX(*D)$ states that  $$F_k\OX(*D) \subseteq \OX((k+1)D)$$ (see \cite{saito93}*{Proposition 0.9}). Using this result, Hodge ideals, denoted $I_k(D)$, are defined using the formula $$F_k\OX(*D) = I_k(D) \otimes\OX((k+1)D)$$ (see \cite{hodgeideals}*{Definition 9.4}). In this paper, we are interested in the case of $k=0$. We introduce and study weighted multiplier ideals, which are contained in the 0-th Hodge ideal $I_0(D)$, and are defined using that the weight filtration of $\OX(*D)$ satisfies $$F_kW_{n+l}\OX(*D) \subseteq F_k\OX(*D) \subseteq \OX((k+1)D)$$ for all $k\geq 0$. 

\begin{definition}[Weighted multiplier ideals] Let $X$ be a smooth complex variety and $D$ a reduced divisor. For $l\geq 0$, we define the ideal sheaf $I_0^{W_l}(D)$ on $X$ by the formula $$F_0W_{n+l}\OX(*D) = I_0^{W_l}(D)\otimes\OX(D).$$ We call $I_0^{W_l}(D)$ the \textit{$l$-th weighted multiplier ideal} of $D$. \end{definition} 

It is easy to see that there is in fact a chain of inclusions \begin{equation}\label{chain} I_0^{W_{1}}(D)\subseteq I_0^{W_{2}}(D)\subseteq \cdots\subseteq I_0^{W_{n-1}}(D)\subseteq I_0^{W_{n}}(D).\end{equation} Indeed, the weight filtration of $\OX(*D)$ is an increasing filtration, hence $$F_0W_{n+l}\OX(*D) \subseteq F_0W_{n+l+1}\OX(*D),$$ or equivalently $$\OX(D)\otimes I_0^{W_l}(D) \subseteq \OX(D)\otimes I_0^{W_{l+1}}(D).$$

\subsection{Simple normal crossings divisor}\label{snc} When $D$ is a simple normal crossings divisor, the ideals $I_0^{W_l}(D)$ can be identified with the ideals defining the strata of the divisor $D$.\\

Before giving the description, we note that the weighted multiplier ideals can be identified using the weight filtration of the sheaf $\omega_X(D)$ (for a description of the weight filtration, see \cite{voisin1}*{Section 8}).

\begin{proposition} Using the notation above $$\omY(D)\otimes I^{W_l}_0(D) \cong W_l\omY(D).$$ \end{proposition}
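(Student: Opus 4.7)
My plan is to reduce to a local computation in SNC coordinates and identify both sides inside $\omY\otimes\OY(\ast D)$. Since the statement is local on $Y$, I would pick coordinates $z_1,\ldots,z_n$ around a point of $D$ with $D=(z_1\cdots z_r)$, write the local components as $H_i=(z_i)$, and set $D_J=\sum_{i\in J}H_i$ for $J\subseteq\{1,\ldots,r\}$. What I need locally is the identification
\[F_0W_{n+l}\OY(\ast D)=\sum_{|J|\le l}\OY(D_J);\]
tensoring with $\omY$, unwinding the definition of $I_0^{W_l}(D)$, and comparing with the standard description of the weight filtration on logarithmic $n$-forms then yields the claim.

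For the weight filtration on $\OY(\ast D)$, I would use the external tensor product decomposition of mixed Hodge modules: in the chosen chart, $\OY(\ast D)\cong \shO_{\mathbb{A}^1}(\ast 0)^{\boxtimes r}\boxtimes \shO_{Y'}$. Each one-dimensional factor has the simple weight filtration $W_1=\shO_{\mathbb{A}^1}$ and $W_2=\shO_{\mathbb{A}^1}(\ast 0)$, while $\shO_{Y'}$ is pure of weight $n-r$. Weights add under $\boxtimes$, so summing over the subsets of $\mathbb{A}^1$-factors where the weight jumps from $1$ to $2$ gives
\[W_{n+l}\OY(\ast D)=\sum_{|J|\le l}\OY(\ast D_J)\]
as sub-$\Dmod_Y$-modules of $\OY(\ast D)$. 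Applying $F_0$, Saito's theorem $F_0\OY(\ast D')=\OY(D')$ for any SNC divisor $D'$ applies to each $\OY(\ast D_J)$; together with strictness of the Hodge filtration on morphisms of MHMs (so that $F_0$ commutes with sums of sub-MHMs), I obtain the target local identification.

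To conclude, since $D$ is SNC we have $\omY(D)=\Omega^n_Y(\log D)$, and the weight filtration on log $n$-forms recalled in \cite{voisin1}*{Section 8} satisfies locally $W_l\Omega^n_Y(\log D)=\sum_{|J|\le l}\omY(D_J)$, matching what I found on the left-hand side. The main obstacle I anticipate is justifying that the local external tensor product decomposition is an isomorphism of mixed Hodge modules and not merely of $\Dmod$-modules, so that the weight additivity applies cleanly. If that feels delicate, I would instead recognize the chain $\sum_{|J|\le l}\OY(\ast D_J)$ as a filtration by sub-MHMs and verify, by induction on $l$ using residues along the strata, that its associated graded pieces match the formula $\gr^W_{n+l}\OY(\ast E)\cong i_{l+}\shO_{E(l)}$ recalled in the preliminaries, which forces the filtration to be the weight filtration.
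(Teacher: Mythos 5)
Your argument is correct, but it takes a genuinely different route from the paper. The paper's proof is short and global: it invokes the filtered quasi-isomorphism $\Omega^{\bullet}_X(\log D)[n] \into \DR(\shO_X(*D))$, compatible with both the weight filtrations and the Hodge filtrations (the Hodge filtration on the log complex being the trivial filtration), and simply takes $\gr^F_{-n}$ of the weight $\leq n+l$ subcomplexes; on one side this produces $W_l\omega_X(D)$ and on the other $\omega_X \otimes F_0W_{n+l}\shO_X(*D)$. You instead compute everything explicitly in local coordinates: you identify $W_{n+l}\shO_Y(*D)$ with $\sum_{|J|\leq l}\shO_Y(*D_J)$, pass to $F_0$ using strictness and $F_0\shO_Y(*D_J)=\shO_Y(D_J)$, and match the result with Deligne's local generators of $W_l\Omega^n_Y(\log D)$. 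Your computation is sound and has the side benefit of simultaneously establishing the explicit generators that the paper derives separately in the next proposition ($I_0^{W_l}(D)=\mathscr{I}_{D^{l+1}}$); the cost is that you must justify the identification of the weight filtration, which the paper gets for free from the cited comparison theorem. On that point: the external product decomposition is only analytically (or \'etale) local, so, as you anticipate, the cleanest rigorous justification is your fallback --- a finite increasing filtration by sub-mixed-Hodge-modules whose graded quotients are pure of the prescribed weights must coincide with the weight filtration, so it suffices to check via residues along the strata that $\bigl(\sum_{|J|\leq l}\shO_Y(*D_J)\bigr)/\bigl(\sum_{|J|\leq l-1}\shO_Y(*D_J)\bigr)$ is the Tate twist of $i_{l+}\shO_{E(l)}$ recalled in the preliminaries. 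With that check carried out, your proof is complete.
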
 

\begin{proof} There is a filtered quasi-isomorphism $$\Omega^{\bullet}_X(\log{D})[n] \into \DR(\OY(*D))$$ (see for example \cite{ks21}*{Prop 9.1}). The complex $\Omega^{\bullet}_X(\log{D})$ has a weight filtration given by $W_l\Omega^{\bullet}_X(\log{D})$, and a Hodge filtration given by the trivial filtration of the complex (see \cite{elzeinetal}*{Section 3.4.1}). This means that $W_l\Omega^{\bullet}_X(\log{D})[n]$ and $\DR(W_{l+n}\OY(*D))$ are quasi-isomorphic, with the induced Hodge filtrations. Therefore, \begin{equation}\label{Wlomega}\gr^F_{-n}\DR(W_{n+l}\shO_X(*D)) \cong \gr^F_{-n}W_l\Omega^{\bullet}_X(\log{D})[n] \cong W_l\omega_X(D).\end{equation}
The lowest degree of the Hodge filtration of $W_{n+l} \shO_X(*D)$ is 0, hence \begin{equation*}\gr^F_{-n}\DR(W_{n+l}\shO_X(*D)) \cong \omega_X\otimes F_0\shO_X(*D) \cong \omega_X(D)\otimes I_0^{W_l}(D).\end{equation*} The result follows from combining the two isomorphisms above. \end{proof}

Using a local computation, we now show that the weighted multiplier ideals can be identified with the ideals of the intersections of the irreducible components of $D$.

\begin{proposition}\label{idealsforsnc} Let $X$ be a smooth complex variety and $D$ a simple normal crossings divisor. Then $$I^{W_l}_0(D) = \mathscr{I}_{D^{l+1}}, $$ the ideal sheaf corresponding to $D^{l+1}$, where $$D^j = \bigcup_{|J|=j}{D_J}.$$
\end{proposition}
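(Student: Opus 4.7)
The plan is to combine the previous proposition (which identifies $\omega_X(D)\otimes I_0^{W_l}(D)$ with $W_l\omega_X(D)$) with a local computation of Deligne's weight filtration on the log-de Rham complex in degree $n$. Since both sides of the claimed equality are coherent ideal sheaves on $X$, it suffices to check the statement in an analytic neighborhood of every point, and the only interesting case is near a crossing point of $D$.

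Working locally, pick coordinates $x_1,\dots,x_n$ around a point so that $D=\{x_1\cdots x_m=0\}$ for some $m\leq n$. By Deligne's definition of the weight filtration, $W_l\Omega_X^{\bullet}(\log D)$ in top degree is generated as an $\shO_X$-module by the log-forms with at most $l$ logarithmic poles, i.e.\ by
\[
\frac{dx_1\wedge\cdots\wedge dx_n}{x_{i_1}\cdots x_{i_k}},\qquad \{i_1,\dots,i_k\}\subseteq\{1,\dots,m\},\ k\leq l.
\]
Dividing by the generator $\frac{dx_1\wedge\cdots\wedge dx_n}{x_1\cdots x_m}$ of $\omega_X(D)$, the previous proposition then identifies the stalk of $I_0^{W_l}(D)$ with the ideal $\mathscr{I}_l$ generated by all products
\[
\prod_{j\in\{1,\dots,m\}\setminus\{i_1,\dots,i_k\}} x_j,\qquad k\leq l,
\]
i.e.\ by the squarefree monomials involving at least $m-l$ of the variables $x_1,\dots,x_m$.

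Next I would compute $\mathscr{I}_{D^{l+1}}$ locally. Since $D^{l+1}=\bigcup_{|J|=l+1}D_J$, and each $D_J$ is cut out by $(x_{j_1},\dots,x_{j_{l+1}})$, a monomial $x^{\alpha}$ lies in $\mathscr{I}_{D^{l+1}}$ iff for every $(l+1)$-subset $J\subseteq\{1,\dots,m\}$ some $\alpha_j$ with $j\in J$ is positive. Equivalently, the set of indices $j\in\{1,\dots,m\}$ with $\alpha_j=0$ has cardinality at most $l$, so $x^{\alpha}$ is divisible by a squarefree product of $m-l$ of the variables $x_1,\dots,x_m$. Thus $\mathscr{I}_{D^{l+1}}$ has exactly the same squarefree generators as $\mathscr{I}_l$, and the two ideals coincide.

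There is no serious obstacle: the content is the explicit local description of Deligne's weight filtration, which is standard, together with bookkeeping to match the two combinatorial descriptions. The only care needed is to verify that ``at most $l$ log poles'' on a top-degree form translates (after clearing the pole divisor $D$) into ``at least $m-l$ of the local defining variables'', which is exactly what identifies it with the ideal of the codimension-$(l+1)$ stratum $D^{l+1}$.
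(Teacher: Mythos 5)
Your proof is correct and follows essentially the same route as the paper: it combines the identification $\omega_X(D)\otimes I_0^{W_l}(D)\cong W_l\omega_X(D)$ with the local description of Deligne's weight filtration in top degree, reducing to the squarefree monomials $x_{I\smallsetminus J}$ with $|J|\le l$. The only difference is that you spell out the final combinatorial check that this monomial ideal equals $\bigcap_{|J|=l+1}(x_j:j\in J)=\mathscr{I}_{D^{l+1}}$, which the paper leaves implicit.
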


\begin{proof} Suppose that around a point $p\in X$ we have coordinates $x_1, \ldots, x_n$ such that $D$ is defined by $x_1\cdots x_r=0$. Let $I=\{1,\ldots ,r\}$. The generators of $W_l\omY(D)$ when $0\leq l\leq r$ are $$\left\{\frac{\omega}{x_J}\right\}_{ J\subseteq I, \ |J|=l}$$ where $\omega$ is the standard generator of $\omega_X$, and for $J= \{j_1,\cdots ,j_l\}$, $x_J = x_{j_1} \cdots x_{j_l}$. If $l>r$, it has the same one generator as for $r$. These generators can be rewritten as $$\frac{\omega}{x_1\cdots x_r}\cdot x_{I\smallsetminus J}.$$
This means that around the point $$I^{W_l}_0(D) = \left <\left\{x_{i_1}\cdots x_{i_{r-l}} :\ i_j\in I\right\}\right >$$ if $0\leq l<r$, and $$I^{W_l}_0(D) = \OY$$ if $l\geq r$. \end{proof}

\subsection{General case} The weighted multiplier ideals for reduced divisors have an equivalent definition using log-resolutions that does not require the theory of mixed Hodge modules. After establishing this equivalent definition, we prove \theoremref{thmadjoint}.\\

First, let us recall the birational definition of $I_0(D)$. Let $X$ be a smooth complex variety and $D$ a reduced effective divisor. Let $f:Y\to X$ be a log-resolution of the pair $(X,D)$. As the 0-th Hodge ideal is the multiplier ideal $$I_0(D) = \mathcal{J}(X, (1-\epsilon)D),$$ for $0<\epsilon\ll 1$ by \cite{hodgeideals}*{Proposition 10.1}, we have that $$f_*\omega_Y(E) \cong \omega_X(D)\otimes I_0(D).$$ The weighted multiplier ideals admit a similar interpretation.
 
\begin{proposition}\label{def} Let $f:Y\to X$ be a log-resolution of the pair $(X,D)$. Then $$f_*W_k\omega_Y(E) \cong \omega_X(D) \otimes I_0^{W_k}(D).$$ \end{proposition}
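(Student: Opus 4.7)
My plan is to reduce the identity to the lowest Hodge filtration piece of a pushforward of mixed Hodge modules, then apply Saito's strictness theorem for the Hodge filtration under projective direct image, and finally identify the $0$-th cohomology of the pushforward with the weight filtration step on $X$. The previous proposition, applied on $Y$ (where $E$ is simple normal crossings), yields $W_l\omega_Y(E) \cong \omega_Y \otimes F_0W_{n+l}\OY(*E)$, and the definition of weighted multiplier ideals gives $\omega_X(D)\otimes I_0^{W_l}(D) \cong \omega_X \otimes F_0W_{n+l}\OX(*D)$. The statement is thus equivalent to
$$f_*\bigl(\omega_Y \otimes F_0W_{n+l}\OY(*E)\bigr) \cong \omega_X \otimes F_0W_{n+l}\OX(*D).$$

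Since $f$ is projective and $W_{n+l}\OY(*E)$ underlies a mixed Hodge module on $Y$ (as the $\Dmod$-module of the sub-MHM $W_{n+l}j'_*\QQ_U^H[n]$, where $j'\colon U\into Y$), Saito's strictness theorem for the Hodge filtration under projective direct image delivers, for every $i$,
$$R^if_*\bigl(\omega_Y \otimes F_0W_{n+l}\OY(*E)\bigr) \cong \omega_X \otimes F_0 H^if_+W_{n+l}\OY(*E).$$
Taking $i=0$ reduces the problem to the identity $H^0f_+W_{n+l}\OY(*E) = W_{n+l}\OX(*D)$ as sub-objects of $\OX(*D)=H^0f_+\OY(*E)$. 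The inclusion $W_{n+l}\OY(*E) \into \OY(*E)$ induces on $H^0f_+$ a morphism whose image, being a sub-MHM of weight $\leq n+l$, lies in $W_{n+l}\OX(*D)$ by strictness of weights; comparing the spectral sequence \eqref{ss1} to its truncation at $-p \leq n+l$ (which computes $H^0f_+W_{n+l}\OY(*E)$), and using \eqref{isoks} together with the $E_2$-degeneration, the two spectral sequences agree on all weight graded pieces of weight $\leq n+l$, forcing the image to be the whole of $W_{n+l}\OX(*D)$.

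The main obstacle is the injectivity of the map $H^0f_+W_{n+l}\OY(*E)\to\OX(*D)$, equivalently the vanishing of (the $F_0$-piece of) $H^{-1}f_+\bigl(\OY(*E)/W_{n+l}\OY(*E)\bigr)$. Since this quotient has $\gr^W$-pieces $i_{k+}\shO_{E(k)}$ for $k>l$, I would handle it by applying Saito's decomposition theorem to the projective morphisms $f\circ i_k \colon E(k)\to X$ and using the fact that $H^if_+\OY(*E)=0$ for $i\neq 0$ together with the $E_2$-degeneration: the required cancellations in the weight spectral sequence pin down $H^0f_+W_{n+l}\OY(*E)$ to be exactly $W_{n+l}\OX(*D)$, which combined with the strictness statement above completes the proof.
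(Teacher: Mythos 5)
Your overall strategy is the paper's: reduce to the $F_0$-piece via the compatibility of $\gr^F_{-n}\DR$ with projective pushforward (this is exactly Lemma~\ref{interpretation} in the paper), and then identify $F_0H^0f_+W_{n+l}\OY(*E)$ with $F_0W_{n+l}\OX(*D)$. But the last step of your reduction contains a genuine error. You claim the problem reduces to the identity $H^0f_+W_{n+l}\OY(*E) = W_{n+l}\OX(*D)$ ``as sub-objects of $\OX(*D)$,'' and you correctly flag that the obstacle is injectivity of $H^0f_+W_{n+l}\OY(*E)\to\OX(*D)$. That injectivity is false in general, and so is the vanishing of $H^{-1}f_+\bigl(\OY(*E)/W_{n+l}\OY(*E)\bigr)$ that you propose to prove: already for a nodal curve $D$ in a surface and $l=0$ one has $W_n\OY(*E)=\OY$, and $H^0f_+\OY$ acquires a skyscraper summand coming from $H^2$ of the contracted exceptional $\PP^1$ (equivalently, $H^{-1}f_+$ of the quotient receives $H^0$ of the contracted components of $E(1)$ and is nonzero). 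No spectral-sequence cancellation will remove this kernel, so the isomorphism of mixed Hodge modules you are aiming for does not hold.

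What rescues the argument --- and what the paper actually does --- is that only the image matters, and only at the level of $F_0$. Saito's result (\cite{saito90}*{Theorem 2.14}) gives $W_{n+l}\OX(*D)=\im\bigl[H^0f_+W_{n+l}\OY(*E)\to\OX(*D)\bigr]$ directly; you do not need to reprove it via the weight spectral sequence, though your sketch of the surjectivity onto $W_{n+l}$ is in the right spirit. Then, since $\gr^F_{-n}\DR$ is exact on morphisms of mixed Hodge modules by strictness, $\gr^F_{-n}\DR$ of this image is the image of the induced map $f_*W_l\omega_Y(E)\to f_*\omega_Y(E)$, and \emph{that} map is injective for the elementary reason that $f_*$ is left exact applied to the inclusion of coherent sheaves $W_l\omega_Y(E)\subseteq\omega_Y(E)$ --- no vanishing of any $H^{-1}f_+$ is needed. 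Your parenthetical ``(the $F_0$-piece of)'' shows you sensed this, but the proof you propose targets the wrong (and false) statement; replacing it with the image/left-exactness observation closes the gap and recovers the paper's proof.
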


\begin{remark} The weight filtration of $\omega_Y(E)$ is concentrated in degrees $0$ to $n$. For degree $n$, $W_n\omega_Y(E) = \omega_Y(E)$, therefore $$I_0(D) = I_0^{W_n}(D).$$ \end{remark}

To obtain this result, we use the fact that the weight filtration of $\OX(*D)$ is determined by the weight filtration of $\OY(*E)$ and the pushforwards of the weighted pieces. The following result is a description of the lowest Hodge piece of these pushforwards and is the key in the proof of \propositionref{def}.

\begin{lemma}\label{interpretation} There are isomorphisms: \begin{renumerate} \item $R^pf_* W_k\omega_Y(E) \cong F_0(H^pf_+(W_{k+n}\OY(*E))) \otimes \omega_X.$ \item $R^pf_*\omega_{E(k)} \cong F_0(H^pf_+(\gr^W_{n+k}\shO_Y(*E)))\otimes \omega_X.$ \end{renumerate} \end{lemma}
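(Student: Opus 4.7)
The plan is to deduce both isomorphisms from Saito's strict compatibility of the Hodge filtration with proper direct images \cite{saito90}. The central observation is that for any filtered $\Dmod_Y$-module $(\Mmod, F_\bullet)$ underlying a mixed Hodge module on $Y$ and satisfying $F_{-1}\Mmod = 0$, Saito's machinery yields
\begin{equation*}
R^p f_*\bigl(\omega_Y \otimes_{\shO_Y} F_0\Mmod\bigr) \cong \omega_X \otimes_{\shO_X} F_0 H^p f_+\Mmod.
\end{equation*}
To derive this auxiliary formula, one first observes that under the vanishing $F_{-1}\Mmod = 0$, the complex $\gr^F_{-n}\DR(\Mmod)$ collapses to $\omega_Y \otimes F_0\Mmod$ concentrated in degree $0$. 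Next, Saito identifies $\mathbf{R} f_*\gr^F_{-n}\DR(\Mmod)$ with $\gr^F_{-n}\DR(f_+\Mmod)$ in $D^b(X)$. Finally, strictness propagates the vanishing $F_{-1} = 0$ to every cohomology $H^q f_+\Mmod$, so each $\gr^F_{-n}\DR(H^q f_+\Mmod)$ likewise collapses to a sheaf in degree $0$, which forces the associated spectral sequence to degenerate.

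Given this auxiliary formula, part (i) is immediate: apply it to $\Mmod = W_{n+k}\OY(*E)$. The hypothesis $F_{-1}\Mmod = 0$ is inherited from $\OY(*E)$, and the identification $\omega_Y \otimes F_0 W_{n+k}\OY(*E) \cong W_k\omega_Y(E)$ is precisely (\ref{Wlomega}) applied on $Y$, valid because $E$ is a simple normal crossings divisor there.

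For part (ii), set $\Mmod = \gr^W_{n+k}\OY(*E)$, which by (\ref{isoks}) is isomorphic to $(i_k)_+\shO_{E(k)}(-k)$ with the Tate twist dictated by the weight $n+k$. The vanishing $F_{-1}\Mmod = 0$ is again inherited from $\OY(*E)$. The remaining task is to show $\omega_Y \otimes F_0\Mmod \cong (i_k)_*\omega_{E(k)}$. The plan is to use that the de Rham functor intertwines the $\Dmod$-module pushforward $(i_k)_+$ with $(i_k)_*\DR_{E(k)}$, and that the lowest Hodge piece $\gr^F_{-(n-k)}\DR_{E(k)}(\shO_{E(k)}) \cong \omega_{E(k)}$ is carried under the Tate shift $(-k)$ to the lowest Hodge piece $\gr^F_{-n}\DR_Y$ of the twisted pushforward on $Y$.

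The main technical obstacle I expect is precisely this Tate-twist bookkeeping in part (ii): one must check that the codimension-$k$ shift built into $(i_k)_+$, combined with the Tate shift $(-k)$, produces exactly $(i_k)_*\omega_{E(k)}$ and not some twist by a determinant of the conormal bundle of $E(k)$ in $Y$. This reduces to the standard description of the Hodge filtration on the $\Dmod$-module direct image of the structure sheaf under a closed immersion of smooth subvarieties, and is compatible by construction with the weight-graded identification (\ref{isoks}).
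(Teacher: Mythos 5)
Your proposal follows essentially the same route as the paper: identify $\gr^F_{-n}\DR$ of each module with a single sheaf in degree $0$ using the vanishing of $F_{-1}$, invoke Saito's compatibility $\mathbf{R}f_*\gr^F_{-n}\DR(\Mmod)\cong\gr^F_{-n}\DR(f_+\Mmod)$, and use that the lowest Hodge piece is preserved under the proper pushforward so that taking $\cohH^p$ yields the stated sheaf isomorphisms. The argument is correct; the only difference is that you spell out the Tate-twist bookkeeping for $\gr^W_{n+k}\OY(*E)\cong i_{k+}\shO_{E(k)}$ in part (ii), which the paper dismisses as ``similar arguments.''
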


\begin{proof} Recall (\ref{Wlomega}): $$F_0W_{k+n}\OY(*E) \otimes\omega_X= \gr_{-n}^F\DR(W_{k+n}\OY(*E)) \cong W_k\omega_Y(E).$$ We use the compatibility of $\gr_{-n}^F\DR$ and the pushforwards that in this case says \begin{equation}\label{compatibility}\textbf{R}f_*\gr_{-n}^F\DR(W_{k+n}\OY(*E)) \cong \gr_{-n}^F\DR(f_+(W_{k+n}\OY(*E)))\end{equation} (see \cite{saito88}*{2.3.7}). The lowest degree of the Hodge filtration of $H^pf_+(W_{k+n}\OY(*E))$ is 0, therefore $\gr_{-n}^F\DR$ applied to this Hodge module is isomorphic to a sheaf in degree 0. Moreover, this sheaf is $$F_0(H^pf_+(W_{k+n}\OY(*E))) \otimes \omega_X.$$ Applying $\cohH^p$ to (\ref{compatibility}) and using that $\gr_{-n}^F\DR(H^pf_+(W_{k+n}\OY(*E)))$ is a sheaf, we obtain the first isomorphism (see e.g. \cite{schnellsurvey}*{Theorem 28.1}). The second isomorphism follows by similar arguments.
\end{proof}

The proof of \propositionref{def} is a combination of the definition of the weight filtration of $\OX(*D)$ and the previous lemma.

\begin{proof}[Proof of \propositionref{def}]The weight filtrations of $\OY(*E)$ and $\OX(*D)$ are related by: \begin{equation*}\label{weight} W_{n+k}\OX(*D) \cong \im[H^0f_+(W_{n+k}\OY(*E)) \stackrel{a_k}{\to} H^0f_+(\OY(*E))] \end{equation*} by \cite{saito90}*{Theorem 2.14} together with (\ref{pushforward}). Applying \lemmaref{interpretation} to the map $a_k$, we obtain \begin{equation*}F_0W_{n+k}\OX(*D) \otimes \omega_X =\gr^F_{-n}\DR W_{n+k}\OX(*D) \cong\im[f_*W_k\omega_Y(E) \to f_*\omega_Y(E)] \end{equation*} which is an inclusion. Finally, recall that $$F_0W_{n+k}\OX(*D) = \omega_X(D)\otimes I_0^{W_k}(D).$$
\end{proof}

Using \propositionref{def} we now give descriptions of the weighted multiplier ideals for low weights. For weight $l=0$, the weighted multiplier ideal is the ideal of the divisor $D$: $$I^{W_0}_0(D) = \OX(-D).$$ Indeed, this is a consequence of the isomorphism $$f_*\omega_Y \cong \omega_X.$$ 

When $l=1$ we have the following interpretation.

\begin{proposition}\label{propositionadjoint} Using the notation above, $$f_*W_1\omega_Y(E) = \omega_X(D)\otimes\adj(D).$$ \end{proposition}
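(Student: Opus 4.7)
The plan is to compare $f_*W_1\omega_Y(E)$ with $f_*\omega_Y(\tilde D)$, where $\tilde D\subset Y$ is the strict transform of $D$ (which one may assume smooth, after refining the log-resolution if necessary; this does not change $f_*W_1\omega_Y(E)$ by \propositionref{def}). The classical birational description of the adjoint ideal gives
\[
\omega_X(D)\otimes\adj(D)\;\cong\; f_*\omega_Y(\tilde D),
\]
so the proposition reduces to the identification $f_*W_1\omega_Y(E)=f_*\omega_Y(\tilde D)$.

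First I would establish a natural inclusion $\omega_Y(\tilde D)\hookrightarrow W_1\omega_Y(E)$ inside $\omega_Y(E)$; this is a local check in SNC coordinates using the explicit generators of $W_1\omega_Y(E)$ recorded in the proof of \propositionref{idealsforsnc}. Combining the Poincar\'e residue short exact sequence
\[
0\to\omega_Y\to W_1\omega_Y(E)\to\bigoplus_i(\alpha_i)_*\omega_{E_i}\to 0
\]
with its counterpart $0\to\omega_Y\to\omega_Y(\tilde D)\to(\alpha_{\tilde D})_*\omega_{\tilde D}\to 0$---whose residue map lands only in the summands indexed by components of $\tilde D$---the snake lemma then yields
\[
0\to\omega_Y(\tilde D)\to W_1\omega_Y(E)\to\bigoplus_{F_k\text{ exceptional}}(\alpha_k)_*\omega_{F_k}\to 0,
\]
where the sum runs over the $f$-exceptional prime components of $E$.

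Applying $Rf_*$ to this sequence and invoking Kov\'acs's local vanishing $R^if_*\omega_Y(\tilde D)=0$ for $i>0$, the proposition reduces to showing that $(f|_{F_k})_*\omega_{F_k}=0$ for each exceptional $F_k$. For this I would use the residue sequence $0\to\omega_Y\to\omega_Y(F_k)\to(\alpha_k)_*\omega_{F_k}\to 0$; applying $Rf_*$ and using Grauert--Riemenschneider ($R^1f_*\omega_Y=0$) gives $(f|_{F_k})_*\omega_{F_k}\cong f_*\omega_Y(F_k)/\omega_X$. Since $f(F_k)\subset X$ has codimension at least two, on $X\smallsetminus f(F_k)$ both $f_*\omega_Y(F_k)$ and $\omega_X$ coincide with $\omega_X$; the torsion-freeness of $f_*\omega_Y(F_k)$ combined with the reflexivity of $\omega_X$ on the smooth variety $X$ (Hartogs) then forces $f_*\omega_Y(F_k)=\omega_X$, giving the desired vanishing. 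The main subtlety is this last identification, which is essentially automatic once one notes that the image of an exceptional prime divisor has codimension at least two in $X$.
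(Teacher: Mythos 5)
Your proposal is correct and follows essentially the same route as the paper: both compare the residue sequence for $\omega_Y(\tilde{D})$ with the one for $W_1\omega_Y(E)$, apply the snake lemma, and reduce the statement (via the classical identification $f_*\omega_Y(\tilde{D})\cong\omega_X(D)\otimes\adj(D)$) to the vanishing $f_*\omega_{F}=0$ for each $f$-exceptional prime component $F$ of $E$. The one place you diverge is in the proof of that vanishing: the paper factors $f$ into a sequence of blowups and computes $H^0$ of the canonical bundle on the projective-bundle exceptional divisor where $F$ first appears, whereas you push forward $0\to\omega_Y\to\omega_Y(F)\to\omega_F\to 0$, invoke Grauert--Riemenschneider, and identify $f_*\omega_Y(F)$ with $\omega_X$ by torsion-freeness and Hartogs across the set $f(F)$ of codimension at least two. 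Your version of this step is self-contained, avoids the reduction to a composition of blowups, and is equally valid; your explicit remark that one may arrange the strict transform $\tilde{D}$ to be smooth (without affecting $f_*W_1\omega_Y(E)$) also makes precise a point the paper leaves implicit.
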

\begin{remark} When $D$ is a simple normal crossings divisor, this was observed in \propositionref{idealsforsnc}. \end{remark}

\begin{proof} Consider the following map between the two short exact sequences:
\begin{diagram*}{2.5em}{2em}
\matrix[math] (m) { 0 & [-1.29em] \omega_Y & \omega_Y(\tilde{D}) & \omega_{\tilde{D}} & [-1.29em] 0 \\
										0 & [-1.29em]\omega_Y & W_1\omega_Y(E) & \omega_{E(1)} & [-1.29em] 0\\}; 
 \path[to] (m-1-1) edge (m-1-2);
 \path[to] (m-1-2) edge (m-1-3);
 \path[to] (m-1-3) edge (m-1-4);
 \path[to] (m-1-4) edge (m-1-5);
 \path[to] (m-2-1) edge (m-2-2);
 \path[to] (m-2-2) edge (m-2-3);
 \path[to] (m-2-3) edge (m-2-4);
 \path[to] (m-2-4) edge (m-2-5);
 \path[to] (m-1-2) edge node[auto]{$=$}   (m-2-2);
 \path[to] (m-1-3) edge  node[auto]{$\alpha$} (m-2-3);
 \path[into] (m-1-4) edge (m-2-4);

\end{diagram*}
where the inclusion on the right is the identity map on its corresponding summand, and the map $\alpha$ is the inclusion $$\omega_Y(\tilde{D}) \into W_1\omega_Y(E).$$

After applying $f_*$ to the diagram, we obtain:

\begin{diagram*}{2.5em}{2em}
\matrix[math] (m) { 0 &  \omega_X & \omega_X(D)\otimes\adj(D)  & f_*\omega_{\tilde{D}} & 0 \\
										0 &  \omega_X & f_*W_1\omega_Y(E) & f_*\omega_{E(1)} & 0\\}; 
 \path[to] (m-1-1) edge (m-1-2);
 \path[to] (m-1-2) edge (m-1-3);
 \path[to] (m-1-3) edge (m-1-4);
 \path[to] (m-1-4) edge (m-1-5);
 \path[to] (m-2-1) edge (m-2-2);
 \path[to] (m-2-2) edge (m-2-3);
 \path[to] (m-2-3) edge (m-2-4);
 \path[to] (m-2-4) edge (m-2-5);
 \path[to] (m-1-2) edge node[auto]{$=$}   (m-2-2);
 \path[to] (m-1-3) edge  node[auto]{$\beta$} (m-2-3);
 \path[into] (m-1-4) edge (m-2-4);

\end{diagram*}

To complete the proof, it is enough to show that given an irreducible component $E_i\subseteq E$ that is exceptional, $f_*\omega_{E_i}=0$, as by the snake lemma we obtain that $\beta$ is an isomorphism. An even stronger statement is well known. We include a proof of this simple case for completeness.\\

We can assume $f$ is a sequence of blowups of smooth centers, and let $g: X_1 \to X_0$ be one of such blowups such that it the image of $E_i$ in $X_1$ is a divisor, but it is not in $X_0$. Say $g$ is a blowup of $Z\subseteq X_0$, $F$ is the exceptional set of $g$ and $g': F \to Z$ is the restriction of $g$ to $F$. We have that $F$ is a $\PP^{n-1-s}$-bundle over $Z$, where $s=\dim{Z}$. As $\omega_F\restr{g^{-1}(z)} \cong \omega_{\PP^{n-1-s}}$ and $$H^0(\PP^{n-1-s}, \omega_{\PP^{n-1-s}})=0$$ we obtain that $$g'_*\omega_F = 0$$ and hence $$g_*\omega_F = 0.$$ Let $h:Y \to X_1$ be the composition of the sequence of blowups up until $X_1$ and $h': E_i \to F$ the restriction to $E_i$. The morphism $h'$ is birational, hence $h'_*\omega_{E_i} \cong \omega_F$, hence $$h_*\omega_{E_i} \cong \omega_F.$$ These two equations imply $$f_*\omega_{E_i} = 0.$$
\end{proof}

\begin{remark} After this article was completed, the author was informed that \propositionref{propositionadjoint} was implicit in the proof of \cite{budursaito05}*{Proposition 3.5}. \end{remark}

\begin{proof}[Proof of \theoremref{thmadjoint}] The results follows from combining \propositionref{def} and \propositionref{propositionadjoint}. \end{proof}

\section{Local Study} The difference between consecutive weighted multiplier ideals is measured by sheaves we introduce, which in most of the cases have their support contained in the singular locus of $D$. When $D$ has at most isolated singularities, these shaves are supported on the singular points, and so it makes sense to talk about their dimension at a point. \theoremref{main} states that the dimension of these sheaves are measured by invariants of the singularities, and we prove this result in Section \ref{higherdimensions}. Before giving the general proof, we examine some low dimensional cases for which the result follows from more elementary techniques.

\subsection{Measuring the difference between weighted multiplier ideals} Note that there is a short exact sequence $$0\to \omega_X(D) \otimes I_0^{W_{k-1}}(D) \to \omega_X(D) \otimes I_0^{W_k}(D) \to C_k\to 0$$ which is obtained by applying the functor $\gr_{-n}^F\DR$ to the short exact sequence given by the weight filtration $$0\to W_{n+k-1}\OX(*D) \to W_{n+k}\OX(*D) \to \gr^W_{n+k}\OX(*D) \to 0.$$

\begin{definition}\label{differencesheaves} We define $$C_k := \gr_{-n}^F\DR(\gr^W_{n+k}\OX(*D)).$$ Denote $$\tilde{C_k}:= C_k\otimes \omega_X(D)^{-1}.$$  \end{definition}

These sheaves measure the difference between consecutive weighted multiplier ideals. For $k=0$ and $k=1$ they have an easy description. In the first case, it is easy to see that $C_0 = \omega_X$. For $k=1$ we have \begin{equation} C_1\cong f_*\omega_{\tilde{D}}. \end{equation} Indeed, it follows from the proof of \theoremref{thmadjoint}, where it was observed that the sequence \begin{equation*}0\to \omega_X(D) \otimes I_0^{W_0}(D) \to \omega_X(D) \otimes I_0^{W_1}(D) \to C_1\to 0 \end{equation*} can be identified with the well known sequence: \begin{equation*} 0\to \omega_X \to \omega_X(D)\otimes \adj(D) \to f_*\omega_{\tilde{D}} \to 0. \end{equation*} For the other values of $k$ we have the following.

\begin{remark}\label{support} For $k\geq 2$, $\Supp{C_k}\subseteq D_{sing}$. This follows from the fact that the adjoint ideal is supported on the singular locus of $D$. We can also use (\ref{ss1}) for $p = -n-k$ and $q= n+k$, noting that $$f(E(k))\subseteq D_{sing}$$ for $k\geq 2$, together with (\ref{pushforward}) and (\ref{isoks}).
\end{remark}

\subsection{Curves in surfaces} Let $X$ be a smooth surface and $D$ a curve. In this case, we can give a complete description of the sheaves $C_k$. Using the notation of the previous section, $n=2$, which means that we are only missing a description of $C_2$. This sheaf measures the difference between the ideal $I_0(D)$ and the adjoint ideal of $D$. By \remarkref{support}, $C_2$ is a skyscraper sheaf. It is easy to see that the dimension of $C_2$ on a singular point of $x\in D$ is $$\dim{(C_{2})_x} = \#{g^{-1}}(x) - 1, $$ where $g:\tilde{D}\to D$ is the restriction of $f$ to $\tilde{D}$. To conclude this, we use an argument similar to the one we present in the next section. That argument is more intricate, and this result follows directly from it adjusting it to this dimension.

\subsection{Surfaces in threefolds}\label{surfaces} It is illustrative to give a proof of \theoremref{main} in the case of a smooth threefold $X$ and $D$ a surface with at most isolated singularities. In the next section, a general proof is given using the theory of mixed Hodge modules. For the proof in this section, we can recover some of those results without the use of such theory. In this way, this proof helps to illustrate what is happening in higher dimension.\\

Let $X$ be a smooth threefold and $D$ a normal surface. As $D$ has isolated singularities, $C_k$ is a skyscraper sheaf for $k\geq 2$ by \remarkref{support}. In this case, we can give descriptions at a singular point of $D$ of the sheaves $C_2$ and $C_3$. These sheaves measure the differences between the one potential ``new'' ideal $I_0^{W_2}(D)$ and the extremes in the following chain of inclusions: $$I^{W_1}_0(D)= \adj(D) \subseteq I^{W_2}_0(D) \subseteq I_0(D).$$ 
In order to describe them, we first fix some notation. Let $f:Y\to X$ be a log-resolution of $(X,D)$ and denote $f^{-1}(D)_{red} = E = \tilde{D} + F$. For simplicity, we assume that $x\in D$ is the only singular point of $D$, and $G:= f^{-1}(x)_{red} \cap\tilde{D}$. The curve $G$ is a simple normal crossings variety, and it is $g^{-1}(x)_{red}$, where $g:\tilde{D}\to D$ is defined by restricting $f$ to $\tilde{D}$. Recall that for a simple normal crossings variety $G$, we can define the dual complex $\Delta(G)$. For the definition of dual complex see \cite{payne13}*{Section 2}. The statement of \theoremref{main} in this dimension is the following:
\begin{renumerate} \item $\dim{(C_{2})_x} = h^{0,1}(H^1(G))= h^{0,1}(G(1))= \sum{g(G_i)}.$ \item $\dim{(C_{3})_x} = h^{0,0}(H^1(G)) = b_1(\Delta(G)).$ \end{renumerate} 
The second equalities are explained in the proof. This result has the following immediate consequence.

%\begin{corollary} The sum of the dimensions of the sheaves is the dimension of the 0-th Hodge piece of the first cohomology of $G$, that is $$\dim{(C_{2})_x}+\dim{(C_{3})_x} = \dim\Gr^0_F(H^1(G)).$$ \end{corollary}
\begin{corollary}\label{corsurfaces} The inclusion $$I_0^{W_1}(D)_x \subseteq I_0^{W_2}(D)_x$$ is an isomorphism if and only if every component of $G(1)$ is rational. The inclusion $$I_0^{W_2}(D)_x \subseteq I_0(D)_x$$ is an isomorphism if and only if the dual complex associated to $G$ is contractible (i.e has no cycles). \end{corollary}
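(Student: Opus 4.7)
The plan is to reduce the two statements to vanishing statements for the difference sheaves $(C_k)_x$ ($k=2,3$) and then apply the dimension formulas (i) and (ii) stated just above the corollary.

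First I would use the short exact sequence
\begin{equation*}
0 \to \omega_X(D) \otimes I_0^{W_{k-1}}(D) \to \omega_X(D) \otimes I_0^{W_k}(D) \to C_k \to 0
\end{equation*}
from \definitionref{differencesheaves}. Tensoring with $\omega_X(D)^{-1}$ and localizing at $x$, the inclusion $I_0^{W_{k-1}}(D)_x \subseteq I_0^{W_k}(D)_x$ is an isomorphism if and only if $(C_k)_x = 0$; since $C_k$ is a skyscraper sheaf at $x$ for $k\geq 2$ by \remarkref{support} (as $D$ has an isolated singularity), this is equivalent to $\dim (C_k)_x = 0$.

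For the first statement ($k=2$), I would invoke formula (i), which gives $\dim(C_2)_x = \sum g(G_i)$, where $G_i$ are the irreducible components of $G$. This sum vanishes if and only if each $G_i$ has genus zero, i.e., every irreducible component of $G(1)$ is rational. For the second statement ($k=3$), formula (ii) gives $\dim(C_3)_x = b_1(\Delta(G))$. Since $D$ is normal and $x$ is the unique singular point, the exceptional divisor $G=g^{-1}(x)_{\mathrm{red}}$ of the resolution $g\colon \tilde D\to D$ is connected, so $\Delta(G)$ is a connected $1$-dimensional CW complex, i.e.\ a connected graph. Therefore $b_1(\Delta(G))=0$ is equivalent to $\Delta(G)$ being a tree, which in turn is equivalent to $\Delta(G)$ having no cycles (equivalently, being contractible).

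There is no real obstacle here beyond assembling what has been established: the only point requiring a brief justification is connectedness of $G$, which follows from the normality of $D$ via Zariski's main theorem (so that the fibers of $g$ are connected). Once the sheaves $C_2, C_3$ are identified with the obstructions to the two inclusions being equalities, the corollary is a direct translation of formulas (i) and (ii) into geometric language.
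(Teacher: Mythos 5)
Your argument is correct and is exactly the route the paper intends: the corollary is stated there as an immediate consequence of the dimension formulas (i) and (ii), with the identification of $(C_k)_x$ as the obstruction coming from the short exact sequence in \definitionref{differencesheaves}. Your added remark that $G$ is connected (via Zariski's main theorem, since $D$ is normal), so that $b_1(\Delta(G))=0$ is equivalent to contractibility, is a worthwhile small justification that the paper leaves implicit.
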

\begin{remark} Note that this condition is determined by a minimal resolution of $D$, since on a different log-resolution we have only extra rational exceptional curves, which do not form new cycles. \end{remark}

\begin{example}\label{example}The two inclusions in \corollaryref{corsurfaces} can be strict, as is the case of $X=\mathbb{A}^3$ and $D\subseteq X$ defined by $$(x^3+y^3+z^3)(y^2z-x^3+4xz^2) + x^6yz = 0.$$ The surface $D$ has an isolated singularity at $(0,0,0)$. A log-resolution of singularities is obtained by first blowing up this point, and then blowing up the 9 singular points in the exceptional set. The 9 points correspond to the intersection points of the cubics $(x^3+y^3+z^3=0)\subseteq \PP^2$ and $(y^2z-x^3+4xz^2=0)\subseteq\PP^2$, which intersect transversely. Each of these points is a node in the strict transform of $D$, and by blowing them up we obtain a log-resolution of singularities. Using this resolution we obtain $$\dim{(C_{2})_{(0,0,0)}} = 2,$$ as there are two genus one exceptional curves isomorphic to the cubics described above, and the other exceptional curves are rational. Also, using the explicit description of the dual complex we obtain $$\dim{(C_{3})_{(0,0,0)}} = 8.$$ Surfaces in $\mathbb{A}^3$ defined by an equation $$f_1\cdots f_r +M = 0$$ where $f_i=0$ is a curve in $\PP^2$, and at least one is non-rational, and $M$ is a high degree monomial such that the equation has isolated singularities, give other examples of a pair $(X,D)$ for which the three ideals are different. \end{example}

The proof of \theoremref{main} for $n=3$ is split into \propositionref{I0w2/I0w1}, \propositionref{h01g1}, \propositionref{I0w2/I0} and \propositionref{h00g1}. Note that after possibly restricting to an affine open set of $X$ which contains the singular point $x$, we can assume that $X$ is a projective variety. Indeed, there is an open set around $x$ which has a smooth projective compactification $\bar{X}$. Let $\bar{D}$ be the closure of $D$ in $\bar{X}$. Consider a log-resolution of $(\bar{X}\smallsetminus x, \bar{D}\smallsetminus x)$ given by a sequence of blow ups with centers over the singular locus of $\bar{D}\smallsetminus x$. By blowing up the same sequence of centers over $\bar{X}$, we obtain a map $X_1\to \bar{X}$. Let $D_1$ be the strict transform of $\bar{D}$. By construction, the map is an isomorphism over $(X,D)$, and $D_1$ has only one isolated singularity corresponding to $x\in D$. We replace $(X,D)$ with $(X_1,D_1)$.\\

\begin{proposition}\label{I0w2/I0w1} There is an isomorphism, $$(C_2)_x\cong \coker{\big[H^{0,1}(F(1)) \stackrel{\alpha}{\to} H^{0,1}(E(2))\big]^*}$$ where the map is given by restriction. \end{proposition}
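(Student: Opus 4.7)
The plan is to unravel $C_2=\gr^F_{-n}\DR(\gr^W_{n+2}\OX(*D))$ (with $n=3$) via the weight spectral sequence~(\ref{ss1}) for the log-resolution $f$, and then apply $\gr^F_{-n}\DR$ termwise. This is permitted because each page of~(\ref{ss1}) consists of polarizable pure Hodge modules, so the differentials are strict with respect to the Hodge filtration and taking $\gr^F_{-n}\DR$ commutes with passing from $E_1$ to $E_2$.

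By the isomorphism $\gr^W_{n+k}\OY(*E)\cong i_{k+}\shO_{E(k)}$ recalled in~(\ref{isoks}), together with the $E_2$-degeneration of~(\ref{ss1}) at the bidegree $(-5,5)$, $\gr^W_{5}\OX(*D)$ is the middle cohomology of the $d_1$-complex
\[
H^{-1}f_+ i_{3+}\shO_{E(3)} \to H^{0}f_+ i_{2+}\shO_{E(2)} \to H^{1}f_+ i_{1+}\shO_{E(1)}.
\]
Applying $\gr^F_{-n}\DR$ and invoking part~(ii) of \lemmaref{interpretation} converts this (up to the implicit $\omega_X$-twist absorbed on both sides) into
\[
0 \to f_*\omega_{E(2)} \to R^{1}f_*\omega_{E(1)},
\]
since the leftmost term $R^{-1}f_*\omega_{E(3)}$ vanishes. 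Consequently $C_2\cong \ker\bigl[f_*\omega_{E(2)}\to R^{1}f_*\omega_{E(1)}\bigr]$.

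Next I would pass to stalks at $x$. Grauert-Riemenschneider applied to $g:\tilde D\to D$ kills the $\tilde D$-summand of $R^1 f_*\omega_{E(1)}$, so $(R^1 f_*\omega_{E(1)})_x=\bigoplus_i H^1(F_i,\omega_{F_i})$; and $(f_*\omega_{E(2)})_x=H^0(E(2),\omega_{E(2)})$, because every component of $E(2)$ (either $F_i\cap F_j$ or a component of $G=\tilde D\cap F$) is contracted to $x$. Serre duality on the smooth curves of $E(2)$ and the smooth surfaces $F_i$ identifies these with $H^{0,1}(E(2))^{*}$ and $H^{0,1}(F(1))^{*}$ respectively, and the elementary identity $\ker(\alpha^{*})=\coker(\alpha)^{*}$ then yields the stated description of $(C_2)_x$, provided that the spectral sequence differential $d_1$ coincides, after Serre duality, with the restriction map $\alpha$.

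This last identification is the main obstacle. I would handle it by tracing $d_1$ back through the residue/Gysin construction for the weight filtration on $\Omega^{\bullet}_Y(\log E)$: the consecutive graded pieces $\gr^W_l$ are realized via residues supported on $E(l)$, and the connecting morphisms between them are, up to sign, the alternating restriction maps $\delta_r$ described in Section~\ref{mhssnc}. Dualizing this local picture via Serre duality on each stratum, combined with the vanishing of the $\tilde D$-contribution, produces precisely the restriction map $\alpha:H^{0,1}(F(1))\to H^{0,1}(E(2))$, which completes the plan.
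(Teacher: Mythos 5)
Your proposal is correct and is in substance the paper's own argument: the paper obtains $(C_2)_x\cong\ker\big[H^{1,0}(E(2))_x\to H^{2,1}(F(1))_x\big]$ by pushing forward the short exact sequences $0\to W_{k-1}\omega_Y(E)\to W_k\omega_Y(E)\to\omega_{E(k)}\to 0$ and then applying Grauert--Riemenschneider and duality, while you reach the same kernel description $C_2\cong\ker\big[f_*\omega_{E(2)}\to R^1f_*\omega_{E(1)}\big]$ via the weight spectral sequence (\ref{ss1}) together with exactness of $\gr^F_{-n}\DR$ --- which is exactly how the paper later proves the general Proposition \ref{exactsequence}, so the two computations coincide. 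The identification of the differential with the Gysin map (dual to restriction), which you flag as the main remaining point, is asserted in the paper with no more justification than you provide.
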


\begin{proof} By \cite{voisin1}*{Proposition 8.32}, $$W_l\omega_Y(E) / W_{l-1}\omega_Y(E) \cong \omega_{E(l)},$$ identifying $\omega_{E(l)}$ with the pushforward. Using the short exact sequence $$ 0 \to \omega_Y \to W_1\omega_Y(E) \to \omega_{E(1)} \to 0$$ we obtain 
\begin{equation}\label{R1fW1} R^1f_*(W_1\omega_Y(E))_x \cong (R^1f_*\omega_{E(1)})_x \cong H^{2,1}(F(1))_x\cong H^{0,1}(F(1))_x^*.\end{equation}
\begin{equation}\label{R2fW1} R^2f_*(W_1\omega_Y(E))_x \cong (R^2f_*\omega_{E(1)})_x \cong H^{2,2}(F(1))_x \cong H^0(F(1))_x^*. \end{equation}
The second isomorphism in each of the equations follows from Grauert–Riemenschneider vanishing theorem, and the last one is given by Poincaré duality.\\

Pushing forward the short exact sequence $$ 0 \to W_1\omega_Y(E) \to W_2\omega_Y(E) \to \omega_{E(2)} \to 0$$ we obtain: \begin{equation}\label{i0w1i0w2} 0\to I_0^{W_1}(D)\otimes \omega_X(D) \to I_0^{W_2}(D)\otimes\omega_X(D) \to f_*\omega_{E(2)} \stackrel{\delta}{\to} R^1f_*(W_1\omega_Y(E)) \to R^1f_*(W_2\omega_Y(E)) \end{equation} As every component of $E(2)$ is exceptional, $$(f_*\omega_{E(2)})_x \cong H^{1,0}(E(2))_x.$$ Composing $\delta$ with the isomorphism (\ref{R1fW1}) on the stalk of $x$, we obtain that $$(C_2)_x\cong \ker{\big[H^{1,0}(E(2))_x \to H^{2,1}(F(1))_x\big]},$$ and the map is the Gysin map. The kernel is isomorphic to the cokernel of the dual map, that is $$(C_2)_x \cong \coker{\big[H^{0,1}(F(1))_x \to H^{0,1}(E(2))_x\big]^*}.$$  \end{proof}

\begin{proposition}\label{h01g1} Using the notation of \propositionref{I0w2/I0w1}, $$\dim{(C_2)_x} = h^{0,1}(G(1)).$$ \end{proposition}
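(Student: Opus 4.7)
By \propositionref{I0w2/I0w1}, $(C_2)_x \cong \coker[\alpha]^{\ast}$ where $\alpha\colon H^{0,1}(F(1)) \to H^{0,1}(E(2))$ is the restriction map. The plan is to decompose $E(2) = G(1) \sqcup H$, where $H = \bigsqcup_{i<j} F_i \cap F_j$ collects the pairwise intersections of the exceptional components, and to write $\alpha = (\alpha_1, \alpha_2)$ with respect to this decomposition. The key claim is that the second component
\[
\alpha_2 \colon H^{0,1}(F(1)) \to H^{0,1}(H)
\]
is an isomorphism. Granted this, the image of $\alpha$ in $H^{0,1}(G(1)) \oplus H^{0,1}(H)$ is the graph of $\alpha_1 \circ \alpha_2^{-1}$, so a straightforward dimension count yields $\dim \coker \alpha = h^{0,1}(G(1))$, which is what we need.

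To prove that $\alpha_2$ is an isomorphism, I will apply the description of the mixed Hodge structure on the cohomology of a simple normal crossings variety from \parref{mhssnc} to the snc variety $\bar F = \bigcup_i F_i$. This identifies $\ker \alpha_2$ with the $(0,1)$-part of $\gr^W_1 H^1(\bar F)$ and $\coker \alpha_2$ with the $(0,1)$-part of $\gr^W_1 H^2(\bar F)$. The next step is to show that these pure weight-one pieces vanish, exploiting the fact that $\bar F$ is the reduced exceptional fibre of a log-resolution over a point $x$ of the smooth threefold $X$. Any component $F_i$ carrying nonzero $H^{0,1}$ arises as a $\mathbb{P}^1$-bundle over a smooth curve $C$ of positive genus (the centre of the blow-up producing $F_i$), and the strict transform of the surface that originally contained $C$ meets $F_i$ in a section isomorphic to $C$. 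Consequently, the restriction $H^{0,1}(F_i) \to H^{0,1}(F_i \cap F_i^{\mathrm{parent}})$ is an isomorphism, and assembling these parent-section restrictions along the tree of blow-ups forces the pure weight-one contributions to $H^1(\bar F)$ and $H^2(\bar F)$ to vanish.

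Once $\alpha_2$ has been shown to be an isomorphism, the dimension count is immediate: $\dim \im \alpha = \dim H^{0,1}(H)$, so $\dim \coker \alpha = \dim H^{0,1}(E(2)) - \dim H^{0,1}(H) = h^{0,1}(G(1))$, and the same holds for $\coker[\alpha]^{\ast}$, completing the proof.

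The main obstacle is the careful bookkeeping of the weight filtration on $H^{\ast}(\bar F)$ along the sequence of blow-ups producing $\bar F$; one must verify that the parent-section restriction isomorphisms interact correctly with the Mayer--Vietoris differentials to kill the pure weight-one contributions in both $H^1(\bar F)$ and $H^2(\bar F)$. Once this is secured, the remaining linear-algebraic deduction is routine.
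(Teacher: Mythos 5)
Your reduction is the same as the paper's: via \propositionref{I0w2/I0w1} everything comes down to showing that the restriction map $H^{0,1}(F(1))\to H^{0,1}(F(2))$ is an isomorphism, i.e.\ that $\Gr^W_1H^1(F)$ and $\Gr^W_1H^2(F)$ have vanishing $(0,1)$-part (your identification of $\ker\alpha_2$ and $\coker\alpha_2$ with these groups is exactly what the paper uses, the cokernel identification relying on $H^{0,1}(F(3))=0$ because $F(3)$ is zero-dimensional). Where you diverge is in how this vanishing is proved, and this is where the gap lies. Your argument is an induction over the tree of blow-ups using ``parent sections,'' and you yourself flag the decisive step --- that these section isomorphisms kill the weight-one parts of both $H^1(F)$ and $H^2(F)$ --- as the main obstacle without carrying it out. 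That step is genuinely nontrivial: for surjectivity of $\alpha_2$ you must classify which components of $F(2)$ carry nonzero $H^{0,1}$ (parent sections versus fibre curves versus strict transforms of earlier intersections), handle centres lying in two exceptional divisors (where blowing up destroys one positive-genus component of $F(2)$ and creates two), and check that a single $H^{0,1}(F_i)$ is not asked to surject onto several independent targets. You also need to rule out that the only ``parent'' of a positive-genus exceptional divisor is $\tilde{D}$ itself (in which case the section lands in $G(1)$, not $F(2)$, and $\alpha_2$ sees nothing); this is true because every curve centre lies over $x$ and hence inside some earlier exceptional divisor, but it must be said. Finally, the whole scheme presupposes that $f$ is a tower of blow-ups along smooth centres, whereas the proposition is stated for an arbitrary log-resolution, so a comparison of resolutions would have to be added.

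The paper avoids all of this with a soft argument: $\Gr^W_1$ is exact on mixed Hodge structures, so the long exact sequences of the pairs $(Y,F)$ and $(X,x)$, together with the identification $Y\smallsetminus F\cong X\smallsetminus x$ and the purity of $H^k$ of the smooth projective varieties $X$ and $Y$ (plus $H^1(X)\cong H^1(Y)$ for birational smooth projective varieties), give $\Gr^W_1H^1(F)=\Gr^W_1H^2(F)=0$ with no information about the combinatorial structure of $F$ whatsoever. I would encourage you to adopt that route: your geometric picture is believable and could likely be completed, but as written the central claim is asserted rather than proved, and the bookkeeping it requires is precisely the content of the proposition.
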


\begin{proof} We prove first that \begin{equation}\label{isoh01f} H^{0,1}(F(1)) \cong H^{0,1}(F(2)). \end{equation} As $F(3)$ is 0-dimensional, we have a short exact sequence $$0 \to H^{0,1}\Gr_1^WH^1(F) \to H^{0,1}(F(1)) \to H^{0,1}(F(2)) \to 0.$$ Indeed, the last map is surjective since the cokernel is isomorphic to $H^{0,1}\Gr_1^WH^2(F)$. Using the long exact sequences of the pairs $(Y,F)$ and $(X, x)$, together with the identification of $Y\smallsetminus F$ and $X\smallsetminus x$, we obtain the following isomorphisms: $$\Gr_1^WH^2(F) \cong \Gr_1^WH^3_c(Y\smallsetminus F) \cong \Gr_1^WH^3(X)=0$$ where the last equality follows from the fact that $X$ is smooth. Moreover, $\Gr_1^WH^1(F) = 0$. Indeed, using again the long exact sequence associated to the pair $(X,x)$, and applying $\Gr_1^W$ we obtain the exact sequence:  $$0\to \Gr_1^WH^1_c(X\smallsetminus x) \to H^1(X) \to \Gr_1^WH^1(x) \to \Gr_1^WH^2_c(X\smallsetminus x) \to \Gr_1^WH^2(X).$$ As $X$ is smooth, we obtain $$ \Gr_1^WH^1_c(X\smallsetminus x) \cong H^1(X)$$ and $$\Gr_1H^2_c(X\smallsetminus x) = 0.$$

 Using the long exact sequence associated to the pair $(Y,F)$, and applying $\Gr_1^W$ we obtain the exact sequence: $$0\to \Gr_1^WH^1_c(Y\smallsetminus F) \to H^1(Y) \to \Gr_1^WH^1(F) \to \Gr_1^WH^2_c(Y\smallsetminus F).$$ As $Y\smallsetminus F\cong X\smallsetminus x$ the claim follows from the fact that $$H^1(X) \cong H^1(Y),$$ as they are birational, and the Hodge numbers $h^{1,0}$ coincide for both varieties. \\

From \propositionref{I0w2/I0w1}, we have $$\dim{(C_2)_x} = h^{0,1}(E(2)) - h^{0,1}(F(1)) + \dim{\ker{\alpha}}.$$ Note that $\ker{\alpha}\subseteq H^{0,1}\Gr_1^WH^1(F) =0$, as a class in the kernel must map to 0 in $H^{0,1}(F(2))$ as well. By the equality $E(2) = G(1) \sqcup F(2)$, together with (\ref{isoh01f}) we obtain $$ \dim{(C_2)_x} = h^{0,1}(G(1)).$$ By Section \ref{mhssnc}, $$h^{0,1}(G(1)) = h^{0,1}(H^1(G)).$$
\end{proof}

\begin{proposition}\label{I0w2/I0} There is an isomorphism, $$(C_3)_x\cong \coker{\big[H^0(E(2))/H^0(F(1)) \to H^0(E(3)) \big]^*.}$$ \end{proposition}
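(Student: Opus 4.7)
The plan is to mirror the proof of \propositionref{I0w2/I0w1} one step higher in the weight filtration. I would apply $\derR f_*$ to the short exact sequence
\[
0 \to W_2\omY(E) \to \omY(E) \to \omega_{E(3)} \to 0
\]
(using $W_3\omY(E)=\omY(E)$ as $\dim Y = 3$). By \lemmaref{interpretation} combined with (\ref{pushforward}), $R^i f_*\omY(E) = 0$ for $i \geq 1$; and $R^1 f_*\omega_{E(3)} = 0$ since $E(3)$ is zero-dimensional. Hence the long exact sequence collapses to
\[
0 \to f_*W_2\omY(E) \to f_*\omY(E) \to f_*\omega_{E(3)} \xrightarrow{\delta'} R^1 f_*W_2\omY(E) \to 0,
\]
so $(C_3)_x \cong \ker(\delta')_x$.

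To analyze $R^1 f_*W_2\omY(E)$, I would feed the short exact sequence $0 \to W_1\omY(E) \to W_2\omY(E) \to \omega_{E(2)} \to 0$ into its long exact sequence. Exactly as in the proof of \propositionref{I0w2/I0w1}, Grauert--Riemenschneider on $\tilde D$ and Serre duality on the relevant strata yield
\[
(R^1 f_*W_1\omY(E))_x \cong H^{0,1}(F(1))^*, \ (R^1 f_*\omega_{E(2)})_x \cong H^0(E(2))^*, \ (R^2 f_*W_1\omY(E))_x \cong H^0(F(1))^*,
\]
with the two connecting maps $\delta$ and $\delta_2$ being Serre duals of the restrictions $H^{0,1}(F(1)) \to H^{0,1}(E(2))$ and $H^0(F(1)) \to H^0(E(2))$, respectively. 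This presents $(R^1 f_*W_2\omY(E))_x$ as an extension
\[
0 \to \coker\delta \to (R^1 f_*W_2\omY(E))_x \to [H^0(E(2))/H^0(F(1))]^* \to 0.
\]

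It then remains to identify $\delta'$. By the naturality of the two interlocking connecting homomorphisms, the composition of $\delta'$ with the projection onto $[H^0(E(2))/H^0(F(1))]^*$ is the Serre dual of the restriction $H^0(E(2))/H^0(F(1)) \to H^0(E(3))$; this dual restriction is well-defined because $\delta^2=0$ in the Deligne complex of \parref{mhssnc} forces the composition $H^0(F(1)) \to H^0(E(2)) \to H^0(E(3))$ to vanish. Dualizing then gives the stated description of $(C_3)_x$, provided $\delta'$ has no component mapping into $\coker\delta$. This vanishing is the main obstacle; I would address it either through a Hodge-theoretic purity argument or, more cleanly, by reading $(C_3)_x$ directly from the $F_0$-part of the weight spectral sequence (\ref{ss1}) applied to $f_+\OY(*E)$, which by $E(4)=\emptyset$ reduces to the two-term complex $R^0 f_*\omega_{E(3)} \to R^1 f_*\omega_{E(2)}$ whose kernel equals $(C_3)_x$, bypassing the extension structure altogether.
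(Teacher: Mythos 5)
Your computation tracks the paper's own proof almost line for line: push forward $0\to W_2\omega_Y(E)\to\omega_Y(E)\to\omega_{E(3)}\to 0$, kill $R^1f_*\omega_Y(E)$, and then resolve $R^1f_*W_2\omega_Y(E)$ via the long exact sequence coming from $0\to W_1\omega_Y(E)\to W_2\omega_Y(E)\to\omega_{E(2)}\to 0$. You have also correctly put your finger on the one nontrivial point: a priori $(R^1f_*W_2\omega_Y(E))_x$ is only an extension of $[H^0(E(2))/H^0(F(1))]^*$ by $\coker\delta$, and one must rule out the component of $\delta'$ landing in $\coker\delta$. What you missed is that the paper has already disposed of this in the proof of \propositionref{h01g1}: there it is shown that $\ker\alpha=0$ for $\alpha:H^{0,1}(F(1))\to H^{0,1}(E(2))$, and since $\alpha$ is the dual of $\delta$, this says exactly that $\delta$ is surjective, i.e.\ $\coker\delta=0$ and the map $\beta:R^1f_*W_1\omega_Y(E)\to R^1f_*W_2\omega_Y(E)$ vanishes. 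So the extension collapses, $(R^1f_*W_2\omega_Y(E))_x$ injects into $H^0(E(2))^*$, and $\ker\delta'$ equals the kernel of the composite into $H^0(E(2))^*$; no purity argument is needed. Your fallback via the weight spectral sequence (\ref{ss1}) is also legitimate and complete --- since $E(4)=\emptyset$ the $E_2$-term computes $C_3$ as $\ker[f_*\omega_{E(3)}\to R^1f_*\omega_{E(2)}]$ --- and this is precisely what the paper does in general in \propositionref{exactsequence} and \corollaryref{differencei0}; it genuinely bypasses the extension, at the cost of invoking Saito's $E_2$-degeneration rather than the elementary observation above. By contrast, the ``Hodge-theoretic purity argument'' you gesture at in the first alternative is not a proof as stated, so as written your argument is only complete if you commit to the spectral-sequence route (and then still identify the $d_1$ differential with the dual of the restriction map, as you do for $\delta'$).
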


\begin{proof}Pushing forward the short exact sequence $$0\to W_2\omega_Y(E) \to \omega_Y(E) \to \omega_{E(3)} \to 0,$$ we obtain \begin{equation} 0\to I_0^{W_2}(D)\otimes\omega_X(D) \to I_0(D)\otimes\omega_X(D) \to f_*\omega_{E(3)} \to R^1f_*(W_2\omega_Y(E)) \to 0, \end{equation} as $R^1f_*\omega_Y(E) = 0$ by \cite{hodgeideals}*{Corollary 12.1}. From the sequence, it follows that $$(C_3)_x\cong \ker{\big[H^0(E(3))_x} \stackrel{\eta}{\to} (R^1f_*(W_2\omega_Y(E)))_x\big].$$ The next terms in the long exact sequence (\ref{i0w1i0w2}) are: $$R^1f_*(W_1\omega_Y(E)) \stackrel{\beta}{\to} R^1f_*(W_2\omega_Y(E)) \to R^1f_*\omega_{E(2)} \to R^2f_*W_1\omega_Y(E) \to 0.$$ In \propositionref{h01g1} it was shown that $\ker{\alpha} = 0$, and this is equivalent to $\beta = 0$. The sequence above, after composing with the isomorphism (\ref{R2fW1}) is isomorphic to the following short exact sequence: $$ 0\to (R^1f_*(W_2\omega_Y(E)))_x \to H^{1,1}(E(2))_x \to H^{2,2}(F(1))_x \to 0.$$ As the first map is injective, $$(C_3)_x\cong \ker{\big[H^0(E(3))_x \to H^{1,1}(E(2))_x\big]},$$ where the map is given by the composition $H^0(E(3))_x \stackrel{\eta}{\to} (R^1f_*(W_2\omega_Y(E)))_x \to H^{1,1}(E(2))_x$. Equivalently $$(C_3)_x\cong \coker{\big[H^0(E(2)) \to H^0(E(3))\big]^*}$$ by taking the dual map. Note that the image of $H^0(F(1)) \to H^0(E(2))$ lies in the kernel of the map above (which follows from dualizing the short exact sequence), and the map is injective on this quotient. The result follows.
\end{proof}

\begin{proposition}\label{h00g1} Using the notation of \propositionref{I0w2/I0},  $$\dim{(C_3)_x} = b_1(\Delta(G)).$$ \end{proposition}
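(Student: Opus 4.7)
The plan is to combine \propositionref{I0w2/I0} with a weight-zero Mayer--Vietoris computation. Observe first that quotienting $H^0(E(2))$ by the image of $H^0(F(1))$ does not alter the image in $H^0(E(3))$, since the composition of the two differentials vanishes; thus $(C_3)_x$ has the same dimension as the cokernel of $\delta_2^E\colon H^0(E(2))\to H^0(E(3))$. Because $\dim X = 3$ forces $E(4)=\emptyset$, formula \eqref{snchp} identifies this cokernel with $\Gr^W_0 H^2(E)$, and the dual complex interpretation of Section~\ref{mhssnc} gives
\[
\dim(C_3)_x \;=\; \dim H^2(\Delta(E);\CC).
\]
The task is therefore to match this with $b_1(\Delta(G))$.

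The second step is to apply the Mayer--Vietoris long exact sequence associated to the decomposition $E=\tilde{D}\cup F$ with $\tilde{D}\cap F = G$ and pass to weight zero:
\[
\cdots \to \Gr^W_0 H^{k-1}(G) \to \Gr^W_0 H^k(E) \to \Gr^W_0 H^k(\tilde{D})\oplus\Gr^W_0 H^k(F) \to \Gr^W_0 H^k(G) \to \cdots.
\]
After the reduction at the beginning of the section, $\tilde{D}$ is smooth projective, so its cohomology is pure and $\Gr^W_0 H^k(\tilde{D})=0$ for $k\geq 1$. Since $G$ is a curve, $\Gr^W_0 H^2(G)=0$. Assuming one knows $\Gr^W_0 H^k(F)=0$ for $k\geq 1$, the piece at $k=2$ collapses to
\[
0 \to \Gr^W_0 H^1(G) \to \Gr^W_0 H^2(E) \to 0,
\]
which combined with $\Gr^W_0 H^1(G)\cong H^1(\Delta(G);\CC)$ yields the result.

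The only real obstacle is therefore the vanishing $\Gr^W_0 H^k(F)=0$ for $k\geq 1$, i.e.\ the fact that $\Delta(F)$ has trivial rational cohomology in positive degrees. This is a special case of the Stepanov--Payne contractibility of the dual complex of a log-resolution at a smooth point, but I would prove it directly, in the spirit of \propositionref{h01g1}. Apply $\Gr^W_0$ to the compactly-supported long exact sequence
\[
\cdots \to H^k_c(Y\smallsetminus F) \to H^k(Y) \to H^k(F) \to H^{k+1}_c(Y\smallsetminus F) \to \cdots
\]
and use the isomorphism $Y\smallsetminus F\cong X\smallsetminus\{x\}$. Purity of the smooth projective variety $Y$ gives $\Gr^W_0 H^k(Y)=0$ for $k\geq 1$, while the long exact sequence for $(X,\{x\})$, combined with the purity of $H^\bullet(X)$, shows $\Gr^W_0 H^\bullet_c(X\smallsetminus\{x\})=0$ in every positive degree. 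These two vanishings sandwich $\Gr^W_0 H^k(F)$ between zeros for $k\geq 1$, completing the argument.
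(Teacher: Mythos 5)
Your argument is correct, and it reaches the conclusion by a route that is genuinely different from (though parallel to) the paper's. The paper stays entirely at the level of dimension counts: from \propositionref{I0w2/I0} it gets $\dim(C_3)_x = b_0(E(3)) - b_0(E(2)) + b_0(F(1))$, then uses the exact sequence $0\to\CC\to H^0(F(1))\to H^0(F(2))\to H^0(F(3))\to 0$ (an Euler--characteristic identity for the weight-zero complex of $F$) together with $b_1(\Delta(G)) = b_0(G(2)) - b_0(G(1)) + 1$ to match the two sides. You instead identify $\dim(C_3)_x$ with $\dim\Gr^W_0 H^2(E)$, i.e.\ $b_2(\Delta(E))$, via (\ref{snchp}), and then run a weight-zero Mayer--Vietoris argument for $E=\tilde{D}\cup F$ with $\tilde{D}\cap F=G$, using strictness of morphisms of mixed Hodge structures to extract $\Gr^W_0H^1(G)\cong\Gr^W_0H^2(E)$. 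The key input is the same in both proofs --- the vanishing $W_0H^k(F)=0$ for $k=1,2$, which you establish exactly as the paper does via the pairs $(Y,F)$ and $(X,\{x\})$ --- but your version is more conceptual: it exhibits the statement as the assertion that $\Delta(E)$ and $\Delta(G)$ have the same cohomology in the relevant degree. The one step you should make explicit is the identification of the map $H^0(E(2))\to H^0(E(3))$ produced by \propositionref{I0w2/I0} (the Poincar\'e dual of a Gysin map) with the differential $\delta_2$ of the combinatorial complex of Section \ref{mhssnc}; this is standard for the weight spectral sequence of a simple normal crossings variety, and the paper relies on the same identification implicitly, but it is what licenses the appeal to (\ref{snchp}). (Also, your remark that $\Gr^W_0H^2(G)=0$ is not actually needed for the segment of the Mayer--Vietoris sequence you use.)
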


\begin{proof} The cohomology of $\Delta(G)$ is isomorphic to the lower weight of the cohomology of $G$, that is $$H^k(\Delta(G)) \cong W_0H^k(G)$$ (see for example \cite{payne13}*{Section 3.}). Recall that $$W_0H^1(G) \cong \coker{[H^0(G(1)) \to H^0(G(2))]}.$$ The kernel of this map is isomorphic to $\CC$, as $G$ is connected. Hence, $$b_1(\Delta(G)) = b_0(G(2)) - b_0(G(1)) + 1.$$

There is an exact sequence \begin{equation}\label{exactF}0\to \CC \to H^0(F(1)) \to H^0(F(2)) \to H^0(F(3)) \to 0.\end{equation} Indeed, this follows because $$W_0H^j(F) \cong W_0H_c^{j+1}(Y\smallsetminus F) \cong W_0H^{j+1}(X) = 0$$ for $j=1,\ 2$,  by a similar argument as the one used in \propositionref{h01g1}, and the fact that $F$ is connected. Applying \propositionref{I0w2/I0} and (\ref{exactF}), we obtain \begin{equation*} \begin{split} \dim(C_3)_x &= \dim{(\coker{\big[H^0(E(2))/H^0(F(1)) \to H^0(E(3)) \big]^*})}\\ &= b_0(G(2)) + b_0(F(3)) - (b_0(F(2)) + b_0(G(1)) - b_0(F(1)))\\ &= b_0(G(2)) - b_0(G(1)) + 1\\& = b_1(\Delta{G}) \end{split} \end{equation*} \end{proof}

\subsection{Higher dimension}\label{higherdimensions} In this section we give a general identification of the sheaves $C_k$ that generalize the descriptions given in Section \ref{surfaces}. This identification works for a pair of a smooth variety $X$ and a reduced divisor $D$, with no restrictions on the singularities of $D$. Next, we give a proof of \theoremref{main}. \\

Let $f:Y\to X$ be a log-resolution and $E:=f^{-1}(D)_{red}$.
\begin{proposition}\label{exactsequence} There is an exact sequence $$0\to C_k \to f_*\omega_{E(k)} \to R^1f_*\omega_{E(k-1)} \to\cdots\to R^{k-1}f_*\omega_{E(1)} \to 0$$ \end{proposition}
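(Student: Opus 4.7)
The plan is to mimic the strategy of \propositionref{I0w2/I0w1} and \propositionref{I0w2/I0}, which proved analogous statements for $n=3$. The key idea is to iterate the short exact sequences
$$(\star)_l:\quad 0 \to W_{l-1}\omega_Y(E) \to W_l\omega_Y(E) \to \omega_{E(l)} \to 0$$
for $l = k, k-1, \ldots, 1$ together with their associated long exact sequences under $Rf_*$, splicing them into the single long exact sequence of the statement.

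I would start by applying $Rf_*$ to $(\star)_k$. Using \propositionref{def} to identify $f_*W_l\omega_Y(E) = \omega_X(D) \otimes I_0^{W_l}(D)$, the associated long exact sequence immediately gives $C_k = \ker\bigl[f_*\omega_{E(k)} \to R^1 f_*W_{k-1}\omega_Y(E)\bigr]$, establishing the left end of the desired sequence. Next, for each $j = 0, 1, \ldots, k-2$, I would define the differential $d_j \colon R^j f_*\omega_{E(k-j)} \to R^{j+1}f_*\omega_{E(k-j-1)}$ as the composition of the connecting map from the LES of $(\star)_{k-j}$ with the natural projection $R^{j+1}f_*W_{k-j-1}\omega_Y(E) \to R^{j+1}f_*\omega_{E(k-j-1)}$ induced by the surjection in $(\star)_{k-j-1}$. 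The identity $d_{j+1}\circ d_j = 0$ is automatic: the composition factors through two consecutive arrows in the LES of a single $(\star)_l$.

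The bulk of the proof is verifying exactness of the resulting complex at each interior term, which I would establish by downward induction on $j$. The key vanishings involved are: $R^j f_*\omega_Y = 0$ for $j \geq 1$ (Grauert--Riemenschneider); $R^j f_*\omega_Y(E) = 0$ for $j \geq 1$ by \cite{hodgeideals}*{Corollary 12.1}; and the cohomological dimension bound $R^j f_*\omega_{E(l)} = 0$ for $j > n - l$, which is automatic since $\dim E(l) = n-l$. The first of these closes the sequence on the right: the LES of $(\star)_1$ yields a surjection $R^{k-1}f_*W_1\omega_Y(E) \to R^{k-1}f_*\omega_{E(1)}$ that makes the final differential $d_{k-2}$ surjective. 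The other two collapse the various interlocking long exact sequences at the appropriate spots.

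The main obstacle, already visible in the diagram chase of \propositionref{I0w2/I0}, is the bookkeeping required to show that at each intermediate stage the piece $R^j f_*W_{k-j}\omega_Y(E)$ injects into $R^j f_*\omega_{E(k-j)}$ with image equal to $\ker d_j$. At each step this reduces to showing that a specific LES-induced map between the $R^{\bullet} f_*W_{\bullet}\omega_Y(E)$ vanishes (analogous to the vanishing of $\beta$ in \propositionref{I0w2/I0}), which in turn follows from the preceding stage of the induction combined with the listed vanishings.
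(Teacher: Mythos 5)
Your construction of the complex and the identification of its two ends are fine, but there is a genuine gap at the core of the proposal: the interior exactness. Unwinding your induction, exactness at $R^jf_*\omega_{E(k-j)}$ requires (among other things) that the maps $R^jf_*W_{l-1}\omega_Y(E)\to R^jf_*W_{l}\omega_Y(E)$ induced by the inclusions of the weight filtration vanish for $j\geq 1$. Already for $k=2$ one needs $R^1f_*W_1\omega_Y(E)\to R^1f_*W_2\omega_Y(E)$ to be zero in order for $f_*\omega_{E(2)}\to R^1f_*\omega_{E(1)}$ to be surjective. This does not follow from Grauert--Riemenschneider, from $R^jf_*\omega_Y(E)=0$, or from dimension bounds: for $n=3$ with an isolated singularity the source is $\bigoplus_i H^{2,1}(F_i)$, which is generally nonzero, and the paper's own proof that this map (the map $\beta$ in \propositionref{I0w2/I0}) vanishes is the Hodge-theoretic argument of \propositionref{h01g1} (namely $\Gr_1^WH^1(F)=0$ because $X$ is smooth), which moreover exploits that the exceptional locus sits over a single point --- an assumption absent from \propositionref{exactsequence}, which is stated for arbitrary reduced $D$. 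So the assertion that the needed vanishings ``follow from the preceding stage of the induction combined with the listed vanishings'' is exactly the missing content, and the induction does not close.

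The paper sidesteps this by working one level up: it takes the weight spectral sequence (\ref{ss1}) for $f_+\shO_Y(*E)$, invokes Saito's theorem that it degenerates at $E_2$ together with $H^jf_+\shO_Y(*E)=0$ for $j\geq 1$ to conclude that the $E_1$-complex is exact in total degree $\geq 1$, identifies $E_2^{-n-k,n+k}\cong\gr^W_{n+k}\shO_X(*D)$ in degree $0$, and only then applies $\gr^F_{-n}\DR$ via \lemmaref{interpretation} to land on the coherent sheaves in the statement. The $E_2$-degeneration (a structural theorem about mixed Hodge modules) is precisely what your diagram chase would be silently reproving; to salvage your approach you must import that degeneration, or the equivalent strictness statements, rather than attempt to derive the interior exactness from coherent vanishing theorems alone.
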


\begin{proof} Consider the spectral sequence (\ref{ss1}). The maps $E_1^{-n-k+j-1, n+k} \to E_1^{-n-k+j,n+k} \to E_1^{-n-k+j+1,n+k}$ correspond to $$ H^{j-1}f_+(\gr^W_{n+k-j+1}\shO_Y(*E)) \to H^jf_+(\gr^W_{n+k-j}\shO_Y(*E)) \to H^{j+1}f_+(\gr^W_{n+k-j-1}\shO_Y(*E)).$$ For $j\geq 1$, $H^jf_+\shO_Y(*E) = 0$ by (\ref{pushforward}), thus the complex above is exact. Applying \lemmaref{interpretation}, we obtain the exact sequence $$R^{j-1}f_*\omega_{E(k-j+1)} \to R^jf_*\omega_{E(k-j)} \to R^{j+1}f_*\omega_{E(k-j-1)}.$$  Finally, for $j=0$ we have that $$E_2^{-n-k, n+k} \cong \gr^W_{n+k}H^0f_+\shO_Y(*E) \cong \gr^W_{n+k}\shO_X(*D).$$ Applying \lemmaref{interpretation} again, the corresponding exact sequence is then $$0\to C_k \to f_*\omega_{E(k)} \to R^1f_*\omega_{E(k-1)}.$$ \end{proof}

\begin{corollary}\label{differencei0} There is an isomorphism $$C_k\cong \ker[f_*\omega_{E(k)} \to R^1f_*\omega_{E(k-1)}].$$ \end{corollary}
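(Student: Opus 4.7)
The statement is an immediate consequence of \propositionref{exactsequence}. The plan is simply to extract the first two (nontrivial) arrows of the long exact sequence
\[
0\to C_k \to f_*\omega_{E(k)} \to R^1f_*\omega_{E(k-1)} \to\cdots\to R^{k-1}f_*\omega_{E(1)} \to 0
\]
established there, and observe that exactness at $C_k$ and at $f_*\omega_{E(k)}$ is exactly the statement that
\[
C_k \cong \ker\bigl[f_*\omega_{E(k)} \to R^1f_*\omega_{E(k-1)}\bigr].
\]

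Concretely, \propositionref{exactsequence} asserts that the map $C_k \to f_*\omega_{E(k)}$ is injective and that its image equals the kernel of the next map. So there is essentially nothing further to verify: the corollary is just the truncation of that sequence to its first three terms. The only mild point to keep in mind is that we implicitly use that $C_k$ sits as a subsheaf of $f_*\omega_{E(k)}$ via the map coming from the spectral sequence construction in the proof of \propositionref{exactsequence}, and that this identification is canonical.

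There is no real obstacle to overcome here; the work has already been done in establishing \propositionref{exactsequence}. The role of \corollaryref{differencei0} is simply to record the most useful consequence in a form ready for applications, namely computing $C_k$ (and thus the length of the quotient $I_0^{W_k}(D)/I_0^{W_{k-1}}(D)$) as the kernel of a single map of coherent sheaves on $X$, which is what is needed in \parref{higherdimensions} to prove \theoremref{main}.
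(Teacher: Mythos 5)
Your proposal is correct and matches the paper exactly: the corollary is an immediate truncation of the exact sequence in \propositionref{exactsequence}, using exactness at the terms $C_k$ and $f_*\omega_{E(k)}$, and the paper itself supplies no further argument. Nothing is missing.
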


\begin{remark} The map  $$f_*\omega_{E(k)} \to R^1f_*\omega_{E(k-1)}$$ can be described in simple terms. Consider the short exact sequences: $$0\to W_{k-1}\omega_Y(E) \to W_k\omega_Y(E) \to \omega_{E(k)} \to 0$$ and $$0\to W_{k-2}\omega_Y(E) \to W_{k-1}\omega_Y(E) \to \omega_{E(k-1)} \to 0.$$  By pushing them forward, we obtain the following maps: 
\begin{equation*}\label{deltak} f_*\omega_{E(k)} \to R^1f_*W_{k-1}\omega_Y(E) \end{equation*} and 
\begin{equation*}\label{epsilonk} R^1f_*W_{k-1}\omega_Y(E) \to R^1f_*\omega_{E(k-1)}. \end{equation*} The map $$f_*\omega_{E(k)} \to R^1f_*\omega_{E(k-1)}$$ is the composition of these two maps. \end{remark}

From now on we assume that $D$ has only one isolated singularity $x\in D$. As in Section \ref{surfaces}, we assume that $X$ is a smooth projective variety.

\begin{proof}[Proof of \theoremref{main}] First, we show that dimension $$h^{0,n-l}(H^{n-2}(G))$$ does not depend on the log-resolution of singularities that is an isomorphism outside of $x$. It is enough to show this is satisfied for two resolution of singularities $g_1: D_1\to D$ and $g_2: D_2\to D$, such that there is a morphism $h: D_1\to D_2$, with $g_1 = g_2\circ h$. We denote by $G_i$ the exceptional divisor of $g_i$. There is an exact sequence of mixed Hodge structures $$H^{n-3}(G_1)\to H^{n-2}(D_2)\to H^{n-2}(D_1)\oplus H^{n-2}(G_2) \to H^{n-2}(G_1) \to H^{n-1}(D_2)$$ as the map $h$ has discriminant $G_2$ (see \cite{PS}*{Corollary 5.37}). After taking $\Gr_F^0\Gr^W_{n-l}$, for $l\geq 3$, we obtain the sequence $$0\to H^{0,n-l}(H^{n-2}(G_2)) \to H^{0,n-l}(H^{n-2}(G_1))\to 0.$$ For $l=2$ we obtain $$0\to H^{0,n-2}(D_2)\to H^{0,n-2}(D_1)\oplus H^{0,n-2}(H^{n-2}(G_2)) \to H^{0,n-2}(H^{n-2}(G_1))\to 0.$$ As $D_1$ and $D_2$ are birational, $h^{0,n-2}(D_1)= h^{0,n-2}(D_2)$, hence $$h^{0,n-2}(H^{n-2}(G_1)) = h^{0,n-2}(H^{n-2}(G_2)).$$

Let $f:Y\to X$ be a log-resolution that is an isomorphism outside of $x$, and $E:=f^{-1}(D)_{red}$. This defines a log-resolution of singularities $g: \tilde{D}\to D$ by restriction, which is an isomorphism outside of $x$. We use the spectral sequence (\ref{ss1}) for the constant map from $X$ to a point. In this case it says \begin{equation}\label{ss2} E_1^{-n-l,q} = H^{q-n-l}(X, \DR(\gr^W_{n+l}\OX(*D))) \Rightarrow H^{q-l}(U,\CC),\end{equation} as $\DR(\OX(*D)) \cong \textbf{R}j_*\CC_U[n]$, where $j:U = X\smallsetminus D\into X$. We also have the isomorphism $$E_2^{-n-l,q}\cong \Gr^W_{q}H^{q-l}(U).$$ Consider the maps $$E_1^{-n-l-1,n+l}\to E_1^{-n-l,n+l}\to E_1^{-n-l+1,n+l}, $$ which correspond to $$\HH^{-1}(X, \DR(\gr^W_{n+l+1}\OX(*D))) \to \HH^{0}(X, \DR(\gr^W_{n+l}\OX(*D))) \to \HH^{1}(X, \DR(\gr^W_{n+l-1}\OX(*D))).$$ By the degeneration of the Hodge-to-de-Rham spectral sequence, we have $$\gr^F_{-n}\HH^{i}(X, \DR(\gr^W_{n+k}\OX(*D))) \cong H^{i}(X, C_k)$$ (see for example \cite{hodgeideals}*{Example 4.2}). Thus, we obtain that the corresponding $\gr^F_{-n}$ piece of the maps above are $$H^{-1}(X, C_{l+1})=0 \to H^0(X, C_l) \to H^1(X, C_{l-1}).$$

For $l\geq 3$, $H^1(X, C_{l-1}) = 0$ by \remarkref{support}. Hence, $$H^0(X, C_l) \cong \gr^F_{-n}E_2^{-n-l,n+l} \cong H^{n,l}(H^n(U)).$$ There is an isomorphism $$ H^{n,l}(H^n(U)) \cong H^{0,n-l}(H_c^n(U))^*$$ (see \cite{PS}*{Theorem 6.23}). Using the long exact sequence of the pair $(X,D)$, $$H^{n-1}(X) \to H^{n-1}(D) \to H_c^n(U) \to H^n(X),$$ we obtain that $$H^{0,n-l}(H_c^n(U)) \cong H^{0,n-l}(H^{n-1}(D)).$$ As the morphism $g: \tilde{D} \to D$ has discriminant $\{x\}$, we have the following sequence of mixed Hodge structures $$H^{n-2}(\tilde{D})\to H^{n-2}(G)\to H^{n-1}(D)\to H^{n-1}(\tilde{D})$$ \cite{PS}*{Corollary 5.37}. We then obtain that $$H^{0,n-l}(H^{n-1}(D)) \cong H^{0,n-l}(H^{n-2}(G)).$$ Hence $$\dim(C_l)_x = h^0(X, C_l) = h^{0,n-l}(H^{n-2}(G)).$$

For $l=2$, consider the following morphisms $$ \gr^F_{-n}E_1^{-n-2,n+2} \to \gr^F_{-n}E_1^{-n-1,n+2} \to \gr^F_{-n}E_1^{-n,n+2},$$ which correspond to $$H^0(X, C_2) \stackrel{\beta}{\to} H^1(X, C_1) \stackrel{\gamma}{\to} H^2(X, C_0).$$ We have \begin{equation*}\begin{split} \ker{\beta}&\cong H^{n,2}(H^n(U))\cong H^{0,n-2}(H_c^n(U))^* \\
\ker{\gamma}/\im{\beta} &\cong H^{n,2}(H^{n+1}(U))\cong H^{0,n-2}(H_c^{n-1}(U))^*\\
\coker{\gamma} &\cong H^{n,2}(H^{n+2}(U))\cong H^{0,n-2}(H_c^{n-2}(U))^* \end{split}\end{equation*}
Consider the following three exact sequences: \begin{equation*}\begin{split} 0\to \ker{\beta} &\to H^0(X,C_2) \to \im{\beta}\to 0 \\ 0\to \ker{\gamma} &\to H^1(X,C_1) \to \im{\gamma}\to 0 \\ 0\to \im{\gamma} &\to H^2(X, C_0) \to \coker{\gamma} \to 0 \end{split}\end{equation*}

Combining the dimensions of the short exact sequences and the isomorphisms above, we obtain \begin{equation*}\begin{split}h^0(X, C_2) &= h^{0,n-2}(H_c^{n-2}(U))-h^{0,n-2}(H_c^{n-1}(U))+h^{0,n-2}(H_c^{n}(U))\\ &+ h^1(X, C_1) - h^2(X, C_0). \end{split}\end{equation*}
From the long exact sequence of the pair $(X,D)$, we obtain \begin{equation*}\begin{split} 0 &=h^{0,n-2}(H_c^{n-2}(U)) - h^{0,n-2}(X) + h^{0,n-2}(H^{n-2}(D)) - h^{0,n-2}(H_c^{n-1}(U))\\ &- h^{0,n-2}(H^{n-1}(D)) + h^{0,n-2}(H_c^{n}(U))\end{split}\end{equation*} Combining the two equations we obtain: \begin{equation*}\begin{split} h^0(X, C_2) &= h^{0,n-2}(\tilde{D}) -h^{0,n-2}(H^{n-2}(D))+h^{0,n-2}(H^{n-1}(D))\end{split}\end{equation*} as $C_1\cong f_*\omega_{\tilde{D}}$ and $C_0\cong \omega_X$. Finally, using that the map $g: \tilde{D}\to D$ has discriminant $\{x\}$, we obtain an exact sequence $$0\to H^{0,n-2}(H^{n-2}(D)) \to H^{0,n-2}(\tilde{D}) \to H^{0,n-2}(H^{n-2}(G))\to H^{0,n-2}(H^{n-1}(D)) \to 0,$$ hence $$\dim{(C_2)_x} = h^0(X,C_2) = h^{0,n-2}(H^{n-2}(G)).$$
\end{proof}

\corollaryref{corcn} and \corollaryref{corc2} are immediate consequences of \theoremref{main}.
\begin{proof}[Proof of \corollaryref{corcn} and \corollaryref{corc2}.] Using the description of Section \ref{mhssnc}, we obtain that $$H^{0,n-2}(H^{n-2}(G))\cong H^{0,n-2}(G(1))$$ by dimension reasons.\\

We have an isomorphism $$W_0H^{n-2}(G) \cong H^{n-2}(\Delta(G))$$ (see \cite{payne13}*{Section 3}). Hence, $$h^{0,0}(H^{n-2}(G)) = \dim{W_0H^{n-2}(G)} = b_{n-2}(\Delta(G)).$$
\end{proof}

\subsection{Example: Isolated weighted homogeneous singularities} For a pair $(X,D)$ such that $D$ has at most isolated weighted homogeneous singularities we can describe the weighted Hodge ideals completely. In this case, there are only at most two possibly different weighted multiplier ideals.

\begin{proposition}\label{whs} Suppose $D$ has only weighted homogeneous isolated singularities, then $$I_0^{W_2}(D) = I_0(D).$$ \end{proposition}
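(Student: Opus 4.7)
The plan is to reduce the claim to a Hodge-theoretic vanishing via \theoremref{main}, and then invoke classical purity of mixed Hodge structures for weighted homogeneous singularities. Since the assertion is local and the weighted multiplier ideals agree off $D_{\mathrm{sing}}$, I may restrict to a single isolated weighted homogeneous singular point $x \in D$ and let $g \colon \tilde D \to D$ be a log-resolution that is an isomorphism away from $x$, with exceptional divisor $G$. Because $I_0^{W_2}(D) \subseteq I_0^{W_3}(D) \subseteq \cdots \subseteq I_0^{W_n}(D) = I_0(D)$, the desired equality is equivalent to $(C_l)_x = 0$ for every $l \ge 3$, and by \theoremref{main} this amounts to showing
\begin{equation*}
h^{0,p}\bigl(H^{n-2}(G)\bigr) = 0 \quad \text{for all } 0 \le p \le n-3.
\end{equation*}
Equivalently, the coherent cohomology $H^{n-2}(G, \shO_G) \cong \Gr^0_F H^{n-2}(G)$ must be concentrated in the top weight piece $\Gr^W_{n-2}$.

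The next step is to exploit the $\CC^{*}$-action on a neighborhood of $x$ induced by weighted homogeneity. This action gives the link $L$ of the singularity the structure of a Seifert $S^1$-bundle over the (generically smooth) weighted projective hypersurface cut out by the defining equation, which puts strong restrictions on the mixed Hodge structure of $H^*(L)$. By Steenbrink's classical purity theorem for an isolated quasi-homogeneous hypersurface singularity, the monodromy on the vanishing cohomology $H^{n-1}(F)$ of the Milnor fiber $F$ is semisimple, with the eigenvalue-$1$ summand pure of weight $n$ and the remaining eigenspaces pure of weight $n-1$. Translating this via the Wang exact sequence for $L$ and the natural comparison of $H^{n-2}(G)$ with the corresponding piece of $H^*(L)$ (through the pair $(\tilde D_{\mathrm{loc}}, \tilde D_{\mathrm{loc}} \smallsetminus G) \simeq (\tilde D_{\mathrm{loc}}, L)$), one deduces that the lowest Hodge piece of $H^{n-2}(G)$ lives only in weight $n-2$, which is precisely the vanishing displayed above. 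Substituting back into \theoremref{main} gives $(C_l)_x = 0$ for all $l \ge 3$, hence the proposition.

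The main obstacle is the careful mixed-Hodge-theoretic bookkeeping relating $H^{n-2}(G)$, $H^*(L)$, and $H^{n-1}(F)$: one has to track how the weight-$n$ summand of the vanishing cohomology interacts with the connecting maps, and how the monodromy filtration on the nearby cycle complex matches the weight filtration on $G$. A potentially cleaner alternative is to use Saito's explicit description of the Hodge and $V$-filtrations on $\OX(*D)$ for weighted homogeneous singularities in terms of the Jacobian algebra of the defining equation, from which the equality $F_0 W_{n+2}\OX(*D) = F_0 \OX(*D)$ can be read off directly, bypassing \theoremref{main} altogether.
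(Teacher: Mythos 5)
Your reduction is exactly the paper's: by \theoremref{main} the claim is equivalent to $h^{0,n-l}(H^{n-2}(G))=0$ for $3\le l\le n$, i.e.\ to $\Gr_F^0H^{n-2}(G)$ being concentrated in weight $n-2$, and the key input in both arguments is Steenbrink's purity theorem for isolated quasi-homogeneous hypersurface singularities. Where you differ is in the transfer step, which you correctly flag as the main obstacle: you propose to route the purity of the vanishing cohomology through the link $L$, the Wang sequence, and a comparison of $H^{n-2}(G)$ with $H^{*}(L)$. The paper short-circuits this bookkeeping. Writing $(Y,0)$ for the germ of $D$ at $x$ and $\tilde{Y}\to Y$ for the induced resolution with exceptional divisor $G$, it uses the exact sequence of mixed Hodge structures $0\to H^{n-2}_G(\tilde{Y})\to H^{n-2}(G)\to H^{n-1}_0(Y)\to 0$; here $H^{n-2}_G(\tilde{Y})$ is pure of weight $n-2$ by a general result of Steenbrink (no homogeneity needed), while $H^{n-1}_0(Y)$ is pure of weight $n-2$ because of the duality $H^{n-1}_0(Y)(n-1)\simeq (H^n_0(Y))^{\vee}$ together with the purity of $H^n_0(Y)$ in weight $n$ --- the quasi-homogeneous hypothesis enters only at this last point, and this is the same purity you invoke on the Milnor fiber (note $H^n_0(Y)\cong H^{n-1}(L)$ and $H^{n-1}_0(Y)\cong \tilde{H}^{n-2}(L)$). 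This yields full purity of $H^{n-2}(G)$ in weight $n-2$ with two citations and no monodromy or Wang-sequence bookkeeping; your route would also work, but the weight computations you defer are precisely the content, so you would need to carry them out. Your suggested alternative via Saito's description of the microlocal $V$-filtration is not what the paper does for this proposition (it uses that description only to record formulas for $I_0(D)$ and $\adj(D)$ afterwards).
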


\begin{proof} Applying \theoremref{main}, it is enough to show that given a log-resolution of singularities $g:\tilde{D}\to D$ with exceptional divisor $G\subseteq\tilde{D}$, $H^{n-2}(G)$ has a pure Hodge structure. Let $(Y,0)$ be a germ of one the isolated singularities as above, and $g': \tilde{Y}\to Y$ a log-resolution of singularities induced by $g$, so that the exceptional divisor of $g'$ is $G$ as well. We have an exact sequence of mixed Hodge structures \begin{equation} 0 \to H^{n-2}_G(\tilde{Y})\to H^{n-2}(G)\to H^{n-1}_0(Y)\to 0 \end{equation} and $H^{n-2}_G(\tilde{Y})$ has a pure Hodge structure of weight $n-2$ (see \cite{St83}*{Corollary 1.12}). Moreover, there is an isomorphism of mixed Hodge structures $H^{n-1}_0(Y)(n-1) \simeq (H^n_0(Y))^{\vee}$ \cite{St83}*{Corollary 1.15}. Finally, $H^n_0(Y)$ has a pure Hodge structure of weight $n$ (see for instance \cite{dimca90}*{p. 289}), and therefore $H^{n-1}_0(Y)$ has a pure Hodge structure of weight $n-2$.
\end{proof}

Saito proved a formula for the Hodge ideals of the pair $(X,D)$ \cite{saito09}*{Theorem 0.7}. Suppose that around a singular point $x$, $D$ is defined by $f= \sum{a_mx^m}$. Let $w=(w_1,\ldots, w_n)$ be the weights, so that $\langle w,m \rangle =1$ for all the nonzero summands of $f$. Then, in the case of $I_0(D)$ the formula is given by $$I_0(D) = \langle x^l\ : \langle w, l + \mathbf{1} \rangle \geq 1 \rangle$$ around the point, where $\mathbf{1}= (1,\ldots, 1)$. A similar formula is satisfied by the adjoint ideal of the pair $(X,D)$, by a description of the microlocal $V$-filtration of $\shO_X$ and \cite{saito93}*{\textsection 2}: $$\adj(D) = \langle x^l\ : \langle w, l +\mathbf{1} \rangle > 1 \rangle .$$

\section{Vanishing theorems} Vanishing theorems are a common source of applications for multiplier ideals and Hodge ideals. We discuss vanishing theorems satisfied by weighted multiplier ideals and applications derived from them.

\subsection{Kodaira-type vanishing} The weighted multiplier ideals satisfy a vanishing theorem, which in the case of $I_0(D)$ corresponds to Nadel vanishing. It is a consequence of the fact that weighted multiplier ideals are defined using the lowest piece of the Hodge filtration of $W_{n+l}\OX(*D)$.
\begin{proposition}\label{kodairatypevanishing} Let $X$ be a smooth projective variety and $L$ is an ample line bundle on $X$. Then $$H^i(X, \omega_X(D)\otimes L\otimes I_0^{W_l}(D)) = 0$$ for $i\geq 1.$ \end{proposition}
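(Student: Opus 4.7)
The plan is to reduce the statement to Saito's vanishing theorem \cite{saito90}*{Proposition 2.33}, which is tailor-made for the lowest graded piece of the filtered de Rham complex of a mixed Hodge module twisted by an ample line bundle.

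First, I would observe that by the very definition of the weighted multiplier ideal,
\[
\omega_X(D) \otimes I_0^{W_l}(D) = \omega_X \otimes F_0 W_{n+l}\OX(*D).
\]
Next, I would invoke the identity used already in the proof of \propositionref{def}: since the Hodge filtration on $W_{n+l}\OX(*D)$ starts in degree $0$, the complex $\gr^F_{-n}\DR\bigl(W_{n+l}\OX(*D)\bigr)$ is quasi-isomorphic to the single sheaf
\[
\omega_X \otimes F_0 W_{n+l}\OX(*D) \;\cong\; \omega_X(D) \otimes I_0^{W_l}(D),
\]
placed in degree $0$. Thus $H^i(X,\omega_X(D)\otimes L\otimes I_0^{W_l}(D))$ coincides with the hypercohomology $\HH^i(X, \gr^F_{-n}\DR(W_{n+l}\OX(*D))\otimes L)$.

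The mixed Hodge module $W_{n+l}(j_*\QQ^H_U[n])$ underlies $W_{n+l}\OX(*D)$, so Saito's vanishing theorem applies: for any ample line bundle $L$ on the smooth projective variety $X$,
\[
\HH^i\bigl(X,\gr^F_{p}\DR(W_{n+l}\OX(*D))\otimes L\bigr) = 0 \quad\text{for all } i\geq 1 \text{ and all } p.
\]
Taking $p=-n$ gives the conclusion. The only mild subtlety is verifying that Saito's vanishing is insensitive to having replaced the full mixed Hodge module $j_*\QQ^H_U[n]$ by its weight submodule $W_{n+l}(j_*\QQ^H_U[n])$; but this is immediate, since the weight filtration is by mixed Hodge submodules and Saito's vanishing holds for every mixed Hodge module on a smooth projective variety.

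I do not expect a real obstacle here: the content of the statement is entirely packaged in the combination of \emph{(i)} the degree-$0$ concentration of $\gr^F_{-n}\DR$ applied to a module whose Hodge filtration starts at $0$, and \emph{(ii)} Saito's vanishing. The only care needed is bookkeeping of the shift $-n$ in the de Rham complex and the corresponding identification with $\omega_X\otimes F_0$.
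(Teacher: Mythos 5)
Your proposal is correct and follows exactly the paper's own argument: identify $\omega_X(D)\otimes I_0^{W_l}(D)$ with $\gr^F_{-n}\DR(W_{n+l}\OX(*D))$, which is a sheaf in degree $0$ because the Hodge filtration of $W_{n+l}\OX(*D)$ starts in degree $0$, and then apply Saito's vanishing theorem \cite{saito90}*{Proposition 2.33} to the mixed Hodge module $W_{n+l}(j_*\QQ^H_U[n])$. No gaps; the bookkeeping of the shift and the applicability of Saito's theorem to the weight submodule are handled as in the paper.
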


\begin{remark} In the case of $I_0^{W_1}(D) = \adj(D)$, this result follows from Kawamata-Viehweg Vanishing (see for instance \cite{lazarsfeld1}*{Theorem 4.2.1 and Theorem 4.3.1}). \end{remark}

\begin{proof} We apply the following vanishing result by Saito \cite{saito90}*{Proposition 2.33}: $$\HH^i(X, \gr^F_k\DR(\Mmod)\otimes L) = 0$$ for all $i\geq 1$ and $\Mmod$ a $\Dmod_X$-module corresponding to a mixed Hodge module. We apply it to $W_{n+l}\OX(*D)$ and obtain $$\HH^i(X, \gr^F_{-n}\DR(W_{n+l}\OX(*D))\otimes L) \cong H^i(X, \omega_X(D)\otimes L \otimes I_0^{W_l}(D)) = 0$$ for all $i\geq 1$. \end {proof}

\subsection{Ample divisors} Suppose $X$ is a smooth projective variety of dimension $n$, and $D$ an ample divisor. Let $U:= X\smallsetminus D$. As $U$ is a smooth and affine variety, $H^{i+n}(U) = 0$ for $i> 0$ (see for instance \cite{lazarsfeld2}*{Theorem 3.1.1}). Using this, one can deduce that $$H^i(X, \omega_X(D) \otimes I_0(D)) = 0$$ for all $i\geq 1$ \cite{hodgeideals}*{Proposition 23.1}. We discuss the obstruction for an analogous vanishing theorem for weighted Hodge ideals. \\

\begin{proposition} \label{vanishingample} There is a short exact sequence: $$0\to H^i(X, \omega_X(D)\otimes I_0^{W_l}(D)) \to H^i(X,C_l)\to H^{i+1}(X,\omega_X(D)\otimes I_0^{W_{l-1}}(D)) \to 0$$ for $i\geq 1$. \end{proposition}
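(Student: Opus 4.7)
The plan is to derive the short exact sequence from the long exact sequence in cohomology associated to
$$0 \to \omega_X(D) \otimes I_0^{W_{l-1}}(D) \to \omega_X(D) \otimes I_0^{W_l}(D) \to C_l \to 0,$$
obtained by applying $\gr^F_{-n}\DR$ to the corresponding short exact sequence of mixed Hodge modules. For the long exact sequence to split into the claimed pieces, it suffices to show that the natural map
$$H^i(X, \omega_X(D) \otimes I_0^{W_{l-1}}(D)) \to H^i(X, \omega_X(D) \otimes I_0^{W_l}(D))$$
vanishes for every $i \geq 1$ (applied also with $i$ replaced by $i+1$, this gives both the injectivity on the left and the surjectivity on the right of the claimed sequence).

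The heart of the plan is to establish that the mixed Hodge structure $\HH^i(X, W_{n+l}\OX(*D))$ is \emph{pure} of weight $n+l+i$ for every $i \geq 1$. Granting this, the inclusion $W_{n+l-1}\OX(*D) \hookrightarrow W_{n+l}\OX(*D)$ induces a morphism of mixed Hodge structures from a pure Hodge structure of weight $n+l-1+i$ to a pure Hodge structure of weight $n+l+i$; any such morphism between pure Hodge structures of different weights is automatically zero. Passing to $\gr^F_{-n}$ via Saito's Hodge-to-de-Rham degeneration theorem (together with strictness of the induced map) then yields the desired vanishing at the level of sheaf cohomology.

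To prove the purity, I would use the weight spectral sequence for $M' := W_{n+l}\OX(*D)$, which degenerates at $E_2$ by Saito's theorem. Its $E_1$-page coincides with the $E_1$-page of the weight spectral sequence for $\OX(*D)$ in the range $p \geq -n-l$ and vanishes for $p < -n-l$; moreover, the incoming $d_1$-differential at $p = -n-l$ is trivial. Since $D$ is ample, the complement $U = X \smallsetminus D$ is smooth and affine, so
$$\HH^i(X, \OX(*D)) = H^{i+n}(U, \CC) = 0 \quad \text{for } i \geq 1,$$
which via $E_2 = E_\infty$ forces the $E_2$-page of the weight spectral sequence of $\OX(*D)$ to vanish in total degree $\geq 1$. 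Transferring this information to $M'$, for $i \geq 1$ the only graded piece of $\HH^i(X, M')$ that can be nonzero sits at $(p, q) = (-n-l, n+l+i)$, and it is identified with the kernel of
$$d_1\colon \HH^i(X, \gr^W_{n+l}\OX(*D)) \to \HH^{i+1}(X, \gr^W_{n+l-1}\OX(*D)).$$

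The main obstacle will be the bookkeeping for the truncated weight spectral sequence and verifying that the only surviving weight-graded piece of $\HH^i(X, M')$ for $i \geq 1$ occurs at the single position described above. The remaining ingredients---Saito's $E_2$-degeneration of the weight spectral sequence, his Hodge-to-de-Rham degeneration for mixed Hodge modules, and the affineness of $U$---are standard inputs from the theory.
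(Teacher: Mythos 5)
Your proposal is correct. The core input --- that $\HH^i(X,\DR(W_{n+l}\OX(*D)))$ is pure of weight $n+l+i$ for $i\geq 1$, proved by comparing the weight spectral sequence of the truncated module $W_{n+l}\OX(*D)$ with that of $\OX(*D)$ and using that $H^{n+i}(U,\CC)=0$ for $U$ affine --- is exactly the content of items (a), (b), (c) in the paper's proof, so the heart of your argument coincides with the paper's. Where you diverge is the endgame: the paper extracts the short exact sequence directly from the $d_1$-differential of the spectral sequence, identifying $\gr^W_{i+n+k}H^i(X,\DR(W_{n+k}\OX(*D)))$ with the image of the incoming $d_1$ and then matching $\im\alpha$ with $H^{i+1}(X,\DR(W_{n+l-1}\OX(*D)))$ and $\ker\gr^F_{-n}\alpha$ with $H^i(X,\omega_X(D)\otimes I_0^{W_l}(D))$; you instead run the long exact sequence of $0\to \omega_X(D)\otimes I_0^{W_{l-1}}(D)\to \omega_X(D)\otimes I_0^{W_l}(D)\to C_l\to 0$ and kill the maps $H^i(X,\omega_X(D)\otimes I_0^{W_{l-1}}(D))\to H^i(X,\omega_X(D)\otimes I_0^{W_l}(D))$ for $i\geq 1$ on the grounds that they are $\gr^F_{-n}$ of morphisms between pure Hodge structures of different weights, hence zero. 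Your version is arguably cleaner and makes the role of purity more transparent, at the cost of an extra appeal to the functoriality of Saito's Hodge-to-de Rham degeneration to identify the coherent-cohomology map with $\gr^F_{-n}$ of the map of mixed Hodge structures; the paper's version keeps everything inside one spectral sequence and incidentally describes the two maps in the resulting short exact sequence explicitly. Both derivations are sound, and your bookkeeping for the truncated spectral sequence (vanishing of $E_1$ for $p<-n-l$, agreement with the full $E_1$ page otherwise, triviality of the incoming $d_1$ at $p=-n-l$) is exactly what is needed.
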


\begin{remark} Using basic results in mixed Hodge theory and the exact sequence $$0\to\omega_X\to\omega_X(D)\otimes\adj(D)\to f_*\omega_{\tilde{D}}\to 0,$$ one can obtain short exact sequences $$0\to H^i(X, \omega_X(D)\otimes \adj(D)) \to H^i(X, f_*\omega_{\tilde{D}}) \to H^{i+1}(X, \omega_X)\to 0$$ for $i\geq 1$, which corresponds to the case $l=1$ of \propositionref{vanishingample}. This is used to study the geometry of singular points in plane curves in \cite{lazarsfeld10}*{Exercise 3.8}. 
\end{remark}

\begin{proof} We are interested in the spaces $H^i(X, \omega_X(D) \otimes I_0^{W_k}(D))$, which can be described as $$H^i(X, \omega_X(D) \otimes I_0^{W_k}(D)) \cong \gr_{-n}^FH^i(X, \DR(W_{n+k}\OX(*D))).$$  Comparing the spectral sequences (see (\ref{ss2})) $$E_1^{-n-l, q} = H^{q-n-l}(X, \DR(\gr^W_{n+l}\OX(*D))) \Rightarrow H^{q-l}(U, \CC)$$ and $$ E_1^{'-n-l, q} = H^{q-n-l}(X, \DR(\gr^W_{n+l}W_{n+k}\OX(*D))) \Rightarrow H^{q-n-l}(X, \DR(W_{n+k}\OX(*D)))$$ and noting that $$E_{2}^{-n-l,q} \cong \Gr^W_qH^{q-l}(U, \CC)$$ $$E_{2}^{'-n-l,q} \cong \gr^W_qH^{q-n-l}(X, \DR(W_{n+k}\OX(*D)))$$ we obtain the following:
\begin{aenumerate}\item\label{a} For $s\geq 1$, $$\gr^W_{i+n+k-s}H^i(X, \DR(W_{n+k}\OX(*D))) \cong \Gr^W_{i+n+k-s}H^{i+n}(U, \CC).$$
\item\label{b} For $s\geq 1$, $$ \gr^W_{i+n+k+s}H^i(X, \DR(W_{n+k}\OX(*D))) = 0.$$
\item\label{c} Let $$\alpha: H^{i-1}(X, \DR(\gr^W_{n+k+1}\OX(*D))) \to H^i(X, \DR(\gr^W_{n+k}\OX(*D)))$$ corresponding to the map $E^{-n-k-1, i+n+k}_1 \to E_1^{-n-k, i+n+k}$. Then we have the following short exact sequence: $$0\to \im{\alpha} \to \gr^W_{i+n+k}H^i(X, \DR(W_{n+k}\OX(*D))) \to \Gr^W_{i+n+k}H^{i+n}(U, \CC) \to 0.$$
\end{aenumerate}

For $i\geq 1$ we obtain $$H^i(X, \DR(W_{n+k}\OX(*D))) \cong \gr^W_{i+n+k}H^i(X, \DR(W_{n+k}\OX(*D))) \cong \im{\alpha}.$$

Using \ref{c} for $k=l-1$, we obtain that $$\im{\alpha} \cong H^{i+1}(X, \DR(W_{n+l-1}\shO_X(*D)))$$ for $$\alpha: H^i(X, \DR(\gr^W_{n+l}\shO_X(*D))) \to H^{i+1}(X, \DR(\gr^W_{n+l-1}\shO_X(*D))).$$ We apply $\gr^F_{-n}\DR$ and obtain the sequence: $$0\to\ker\gr^F_{-n}\alpha\to H^i(X, C_l) \to H^{i+1}(X, \omega_X(D)\otimes I_0^{W_{l-1}}(D)) \to 0.$$ Finally, we have $$E_2^{-n-l, n+l+i} \cong \Gr^W_{n+l+i} H^{n+i}(U,\CC) = 0.$$ Therefore, $$  \ker\gr^F_{-n}\alpha \cong H^i(X, \omega_X(D)\otimes I_0^{W_l}(D)).$$ The result follows.
\end{proof}

\begin{proof}[Proof of \theoremref{thmvanishingample}] If $D$ has isolated singularities, then $C_l$ is a skyscraper sheaf for $l\geq 2$ and hence $$H^i(X, C_l)=0$$ for $i\geq 1$. Applying \propositionref{vanishingample} we obtain the result.\\
If $\dim{X} = 2$, then $H^1(X, f_*\omega_{\tilde{D}}) \cong H^{1,1}(\tilde{D})$ that is 1-dimensional. Similarly, $H^2(X, \omega_X)\cong H^{2,2}(X)$ that is 1-dimensional as well. Hence, $$H^1(X, \omega_X(D)\otimes I_0^{W_1}(D)) = 0.$$
\end{proof}

\begin{remark}\label{remarkvanishing} This result does not hold in general for $l=1$ and $i=1$. Indeed, let $X= \PP^3$. The sequence in \propositionref{vanishingample} is $$0\to H^1(X, \omega_X(D)\otimes I_0^{W_1}(D)) \to H^1(\tilde{D},\omega_{\tilde{D}})\to H^2(X,\omega_X )=0,$$ hence $$H^1(X, \omega_X(D)\otimes I_0^{W_1}(D)) \cong H^{2,1}(\tilde{D}).$$ Let $D = (x_0^3 + x_1^3+x_2^3=0)$. $D$ is the projective cone of a smooth projective curve $C$ of genus 1. A log-resolution of the pair $(X,D)$ is the blowup at the vertex of $D$. The cohomology $$H^{2,1}(\tilde{D})\cong H^{1,0}(C)\otimes H^{1,1}(\PP^1) \cong \CC.$$
\end{remark}

\subsection{Applications} In this section, we combine the local and global results of weighted multiplier ideals. First, we prove \corollaryref{corind}, which is a direct consequence of vanishing theorems we have proved.
\begin{proof}[Proof of \corollaryref{corind}] Consider the short exact sequence $$0\to \shO_{\PP^n}(k)\otimes I_0^{W_l}(D) \to \shO_{\PP^n}(k) \to \shO_{Z_l}(k)\to 0.$$ The result follows from taking the long exact sequence of cohomology and applying \theoremref{thmvanishingample} and \propositionref{kodairatypevanishing}. \end{proof}

\noindent\textbf{Isolated log-canonical singularities.} When the pair $(X,D)$ is log-canonical, and $D$ has at most isolated singularities, for a singular point $x\in D$ there is at most one $l$ such that $$\dim{(C_l)_x} =1$$ and the rest of the dimensions of $(C_k)_x$ are 0 \cite{ishii85}*{Proposition 3.7}. Moreover, if the singularity is not rational, such an $l$ exists and $x$ is called a singularity of type $(0,n-l)$ \cite{ishii85}*{Definition 4.1}. In particular, the lowest possible type is $(0,0)$, and the highest is $(0,n-2)$.  What Ishii proved is that $$\dim{\Gr_F^0H^{n-2}(G)} = h^{n-2}(G, \shO_G)= 1,$$ and this result includes that $$p_g(D,x) = 1,$$ where $p_g(D,x) = \dim(R^{n-2}g_*\shO_{\tilde{D}})_x$ is the geometric genus of the singularity. The geometric genus can be computed as $$p_g(D,x) = \dim(\omega_D/g_*\omega_{\tilde{D}})_x$$ (see \cite{ishiibook}*{Proposition 6.3.11}). Using the sequences $$0\to \omega_X\to \omega_X(D) \otimes \adj(D) \to f_*\omega_{\tilde{D}}\to 0,$$ and $$0\to \omega_X\to \omega_X(D) \to \omega_{D}\to 0,$$ we see that if $Z_1$ is the scheme defined by $\adj(D)$, then the length of $\shO_{Z_1}$ at $x$ is $$\dim(\omega_D/g_*\omega_{\tilde{D}})_x = p_g(D,x).$$ This means that as $p_g = 1$ for all the log-canonical not rational singularities, $Z_1$ is a reduced scheme. Using this we obtain the following result.

\begin{corollary}\label{proplcpoints} Let $D$ a reduced hypersurface of $\PP^n$ of degree $d$ with at most isolated singularities. Assume $(\PP^n,D)$ is log-canonical. Let $Z$ be the union of the singular points in $D$ which are not rational, and are of type $(0,0)$, \ldots, $(0, n-3)$. Then, $$\#{Z} \leq \binom{d-1}{n}.$$ \end{corollary}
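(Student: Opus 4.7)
The plan is to identify the scheme $Z_2$ cut out by $I_0^{W_2}(D)$ with the reduced structure on $Z$, and then apply the surjectivity in \corollaryref{corind} (with $l = 2$) to bound $\dim_{\CC} H^0(\shO_{Z_2})$.

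First I would analyze $\operatorname{length}(\shO_{Z_l})_x$ at each isolated singular point $x \in D$. Since $(\PP^n, D)$ is log-canonical with isolated singularities, every such $x$ falls into one of three classes: (a) rational, (b) non-rational of type $(0,n-2)$, or (c) non-rational of types $(0,0), \ldots, (0, n-3)$. For (a), one has $\adj(D)_x = \shO_{X,x}$, hence also $I_0^{W_2}(D)_x = \shO_{X,x}$, so $x \notin Z_2$. For (b) and (c), the identification $\shO_X/\adj(D) \cong \omega_D/g_* \omega_{\tilde D}$ obtained from the residue sequence and the sequence for $\adj(D)$, together with $p_g(D,x) = 1$ (Ishii), gives $\operatorname{length}(\shO_{Z_1})_x = 1$; and by the cited classification there is a unique $l\in\{2,\ldots,n\}$ with $\dim(C_l)_x = 1$ and $\dim(C_k)_x = 0$ for $k \geq 2$, $k \neq l$.

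Next, twisting the defining sequence $0\to\omega_X(D)\otimes I_0^{W_1}(D)\to\omega_X(D)\otimes I_0^{W_2}(D)\to C_2\to 0$ by $\omega_X(D)^{-1}$ yields
\[
\operatorname{length}(\shO_{Z_1})_x - \operatorname{length}(\shO_{Z_2})_x = \dim(C_2)_x.
\]
In case (b) this reads $1 - 1 = 0$, so these points do not lie in $Z_2$; in case (c), $\dim(C_2)_x = 0$ and $\operatorname{length}(\shO_{Z_2})_x = 1$. Thus $Z_2$ is a reduced zero-dimensional scheme supported exactly on $Z$, and $\dim_{\CC} H^0(\PP^n, \shO_{Z_2}) = \#Z$.

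Finally, applying \corollaryref{corind} with $l = 2$ and $k = d - n - 1$ gives a surjection
\[
H^0(\PP^n, \shO_{\PP^n}(d-n-1)) \onto H^0(\PP^n, \shO_{Z_2}(d-n-1)) = H^0(\PP^n, \shO_{Z_2}),
\]
the last equality because $Z_2$ is zero-dimensional so the twist acts trivially. Comparing dimensions with $\dim H^0(\PP^n, \shO_{\PP^n}(d-n-1)) = \binom{d-1}{n}$ yields the asserted bound. The only delicate step is the bookkeeping that isolates precisely types $(0,0),\ldots,(0,n-3)$ as the points of $Z_2$ with multiplicity one; once this structural identification is made, the vanishing result of \theoremref{thmvanishingample} (via \corollaryref{corind}) does the rest.
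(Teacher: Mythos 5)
Your proposal is correct and follows essentially the same route as the paper: the paper's one-line proof rests on the preceding discussion showing that $Z_1$ is reduced (via $p_g(D,x)=1$ from Ishii) and that $Z_2$ is exactly the reduced scheme on $Z$, after which \corollaryref{corind} with $k=d-n-1$ gives the bound $\binom{d-1}{n}$. You have simply written out that discussion explicitly, including the correct length bookkeeping $\operatorname{length}(\shO_{Z_1})_x-\operatorname{length}(\shO_{Z_2})_x=\dim(C_2)_x$ separating type $(0,n-2)$ (where $l=2$) from types $(0,0),\ldots,(0,n-3)$ (where $l\geq 3$).
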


\begin{proof} By the discussion above, $Z$ coincides with the scheme defined by $I_0^{W_2}(D)$. The result follows by applying \corollaryref{corind}. \end{proof}

\begin{remark}In the same setting as \corollaryref{proplcpoints}, using the vanishing theorem for $\adj(D)$, we obtain that the number of non-rational points is bounded by $\binom{d}{n}$. Thus, the weighted multiplier ideals give a better bound on this subset. Note that as $I_0(D)$ is trivial, we cannot use Nadel vanishing theorem.
\end{remark}

\noindent\textbf{Isolated non log-canonical singularities.} Let $D\subseteq \PP^n$ be a reduced hypersurface of degree $d$ with at most isolated singularities. Using the strategy of the proof of \corollaryref{proplcpoints}, when $D$ has low degree, we still obtain information about the singularities of $D$ without assuming that the pair $(\PP^n, D)$ is log-canonical. More precisely, we obtain that if $d=n$, then $D$ has at most one non-rational singularity, and this singular point is log-canonical of type $(0,n-2)$. Moreover, let $Z_l$ be the scheme corresponding to $I_0^{W_l}(D)$. We also obtain that if $d=n+1$, then $Z_2$ (and the rest of $Z_l$ for $l\geq 3$) has at most length one. This means that all non-rational singular points, except possibly one, are log-canonical of type $(0, n-2)$.\\

If we use the same methods in the case $l=1$, we obtain that $Z_1$ has length $q+1$, and $q\leq n$. In particular, the sum of the geometric genera of the singular points is equal to $q+1$. The number of singular points depends on the geometric genus of the one singular point that is not log-canonical of type $(0,n-2)$ (as the other singular points have geometric genus equal to 1). For instance, if $q=1$, then the non-rational singular points of $D$ are either one point of geometric genus 2, or two points of geometric genus 1, one of which must be log-canonical of type $(0,n-2)$.\\

Indeed, consider as in the proof of \corollaryref{proplcpoints}, the exact sequence \begin{equation}\begin{split} 0 & \to H^0(\PP^n, \shO_{\PP^n}(d-n-1)\otimes I^{W_l}_0(D)) \to H^0(\PP^n, \shO_{\PP^n}(d-n-1))\to H^0(\PP^n, \shO_{Z_l}) \\ & \to H^1(\PP^n, \shO_{\PP^n}(d-n-1)\otimes I^{W_l}_0(D)) \to 0.  \end{split}\end{equation} Recall that by \theoremref{thmvanishingample},  $$H^1(\PP^n, \shO_{\PP^n}(d-n-1)\otimes I^{W_l}_0(D)) =0$$ for $l\geq 2$, and using a similar argument to the one used in \remarkref{remarkvanishing}, $$H^1(\PP^n, \shO_{\PP^n}(d-n-1)\otimes I^{W_1}_0(D)) \simeq H^{n-1,1}(\tilde{D}),$$ where $\tilde{D}\to D$ is a log-resolution of singularities. Let $q:= h^{n-1,1}(\tilde{D}) = h^{0,n-2}(\tilde{D})$. Note that this number does not depend on the log-resolution. We also have a short exact sequence \begin{equation} 0 \to H^0(\PP^n, \shO_{\PP^n}(d-n)\otimes I^{W_1}_0(D)) \to H^0(\PP^n, \shO_{\PP^n}(d-n))\to H^0(\PP^n, \shO_{Z_1})\to 0.\end{equation} The results follows from combining the information of these exact sequences.\\

In the case of cubic surfaces, this recovers a result of Bruce and Wall, which is a consequence of the classification of singularities in \cite{bw79}. For quartic surfaces, we partially recover the results in \cite{umezu81}*{Theorem 1}.

\section{Restrictions of weighted multiplier ideals to subvarieties} In this section, we describe the relation between the weighted multiplier ideals of a pair, and the pair obtained by restricting to a subvariety consisting of the intersection of general hypersurfaces. 

\subsection{Restriction Theorem} We start by comparing the weighted multiplier ideals of a pair $(X,D)$, and the pair arising from restricting to a general hypersurface.

\begin{proof}[Proof of \theoremref{thmrestriction}.]Let $f: Y\to X$ be a log-resolution of the pair $(X,D)$, $E= f^{-1}(D)_{red}$, and $H':= f^{-1}(H)$. We will use the following short exact sequence: $$0\to W_l\omega_Y(E) \to W_l\omega_Y(E)\otimes\shO_Y(H') \to W_l\omega_{H'}(E\restr{H'}) \to 0.$$ 
Pushing forward and tensoring by $\omega_X(D+H)^{-1}$, using that $\omega_X(D+H)\otimes\shO_H \cong \omega_H(D_H)$, we obtain the following long exact sequence: \begin{equation*}\begin{split}& 0\to  I_0^{W_l}(D)\otimes\shO_X(-H) \to I_0^{W_l}(D)\to I_0^{W_l}(D_H)\to\\\to &R^1f_*W_l\omega_Y(E)\otimes\shO_X(-H)\otimes\omega_X(D)^{-1}\to R^1f_*W_l\omega_Y(E)\otimes\omega_X(D)^{-1}.\end{split}\end{equation*} The kernel of the last map is isomorphic to $\shtor_1(R^1f_*W_l\omega_Y(E)\otimes\omega_X^{-1}, \shO_H) \otimes \OX(-D)$, hence we obtain an exact sequence \begin{equation}\begin{split}\label{sequencerestriction} 0\to I_0^{W_l}(D)\otimes\shO_X(-H) \to I_0^{W_l}(D)\to I_0^{W_l}(D_H)\to\\ \to \shtor_1(R^1f_*W_l\omega_Y(E)\otimes\omega_X^{-1}, \shO_H) \otimes \OX(-D)\to 0. \end{split} \end{equation}

As $H$ is general, the inclusion $H\into X$ is non-characteristic for the Hodge module which corresponds to the $\Dmod_X$-module $H^1f_+(W_{l+n}\shO_Y(*E)) =: M$. This means that $L^ji^*\gr_p^FM =0$ for all $j<0$, and $p\in\ZZ$ (see \cite{schnell16}*{Section 8}). As $$R^1f_*W_l\omega_Y(E) \cong \gr_{-n}^F\DR(M) \cong \omega_X\otimes\gr_0^F(M)$$ by \lemmaref{interpretation}, we obtain that $$\shtor_1(R^1f_*W_l\omega_Y(E)\otimes\omega_X^{-1}, \shO_H) = 0.$$ Thus, using (\ref{sequencerestriction}) we obtain $$I_0^{W_l}(D)\cdot \shO_H = I_0^{W_l}(D_H).$$
\end{proof}

\begin{remark}\label{remarkrestriction} Weighted multiplier ideals, in fact, satisfy a stronger result. This result is the analogue of the Restriction Theorem for multiplier ideals \cite{lazarsfeld2}*{Theorem 9.5.1}. Given any hypersurface $H\subseteq X$ such that $H\subsetneq \Supp(D)$, and $D\restr{H}=:D_H$ is reduced, \begin{equation}\label{rt} I_0^{W_l}(D_H)\subseteq I_0^{W_l}(D)\cdot\shO_H\end{equation} for all $l\geq 0$. The proof of this result requires the methods of mixed Hodge modules not described in this paper and holds for all weighted Hodge ideals. For this reason, it is presented in \cite{olano21b}. When $l=1$, (\ref{rt}) can be proved using a similar proof to the one used for multiplier ideals.
\end{remark} 

\begin{remark} The generality assumption in \theoremref{thmrestriction} is used to ensure that $H$ is smooth, $D\restr{H}$ is reduced, and given $f:Y\to X$ a log-resolution of the pair $(X,D)$, then it is also a log-resolution of the pair $(X, D+H)$ and $f^*H = H'$, the strict transform of $H$. These conditions are satisfied if we work we several general hypersurfaces simultaneously, and in this way we obtain a more general version of \theoremref{thmrestriction}. Namely, let $H_1, \ldots , H_s$ general elements of base-point free linear systems $L_1, \ldots ,L_s$. Let $V = \bigcap H_i$ and $l\geq 0$, then $$I_0^{W_l}(D\restr{V}) = I_0^{W_l}(D)\cdot \shO_V.$$
\end{remark}

As an application of \theoremref{thmrestriction} and its more general version, we obtain a description of the rank of $C_l$ for a pair $(X,D)$. Let $s$ be the dimension of $D_{\rm sing}$, $H_1, \ldots , H_s$ general hypersurfaces, and $V = \bigcap H_i$. By construction, $D\restr{V} =: D_V$ has at most isolated singularities. Let $\tilde{V}\to V$ a log-resolution of singularities of the pair $(V,D_V)$, and $G_V\subseteq \widetilde{D_V}$ the exceptional divisor of the restriction of this map to $\widetilde{D_V}$.

\begin{proposition}\label{rankcl} Using the notation above, let $Z\subseteq D_{\rm sing}$ be an irreducible component of dimension $s$, $x\in Z$ a singular point of $D_V$, and $G_{V,x}$ be the fiber of $x$ on the resolution of singularities of $D_V$. Then, the rank of $C_l\restr{Z}$ as coherent sheaf on $Z$ is $h^{0,n-l-s}(H^{n-2-s}(G_{V,x}))$. \end{proposition}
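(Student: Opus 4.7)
The strategy is to reduce to \theoremref{main} by restricting to a transverse slice. By Bertini's theorem and the generality of the $H_i$, the intersection $V = \bigcap_{i=1}^s H_i$ is smooth of dimension $n-s$, meets the $s$-dimensional component $Z$ transversely at finitely many points, and meets every other irreducible component of $D_{\rm sing}$ in a proper subvariety avoiding these points. In particular $D_V := D\restr{V}$ has an isolated singularity at each $x \in V \cap Z$, none of which lies on other components of $D_{\rm sing}$.

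The plan is to first apply the generalized restriction statement of \remarkref{remarkrestriction} iteratively, yielding $I_0^{W_l}(D) \cdot \shO_V = I_0^{W_l}(D_V)$ for every $l \geq 0$, and then to upgrade this to an isomorphism of coherent sheaves
\[ C_l \otimes_{\shO_X} \shO_V \;\cong\; C_l^{(V)}, \]
where $C_l^{(V)}$ denotes the analogous difference sheaf for the pair $(V, D_V)$. This amounts to verifying that restriction to $V$ of the defining sequence
\[ 0 \to \omega_X(D) \otimes I_0^{W_{l-1}}(D) \to \omega_X(D) \otimes I_0^{W_l}(D) \to C_l \to 0 \]
remains exact, which reduces to the vanishing of a Tor sheaf. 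Following the argument used in the proof of \theoremref{thmrestriction}, each term can be identified with the lowest Hodge graded piece of the de Rham complex of a mixed Hodge module; the generality of the $H_i$ ensures that $V$ is non-characteristic for the ambient Hodge modules, so the required vanishing follows from $L^j i^* \gr_p^F M = 0$ for $j < 0$ (see \cite{schnell16}).

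Once this identification is in place, the conclusion is immediate. Transversality of $V$ and $Z$ at $x$, together with $x$ not lying on any other component of $D_{\rm sing}$, implies
\[ \dim_k (C_l \otimes \shO_V)_x \;=\; \rk(C_l \restr{Z}), \]
the generic rank as a coherent sheaf on $Z$. Applying \theoremref{main} to the pair $(V, D_V)$ at the isolated singularity $x$, with $\dim V = n-s$ now playing the role of $n$, then gives
\[ \rk(C_l \restr{Z}) \;=\; \dim(C_l^{(V)})_x \;=\; h^{0,(n-s)-l}\bigl(H^{(n-s)-2}(G_{V,x})\bigr), \]
as required.

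The main obstacle is the Tor vanishing in the second step. While \theoremref{thmrestriction} treats a single general hypersurface via non-characteristic restriction, iterating this to the codimension-$s$ slice $V$ requires showing that non-characteristicity propagates along the chain $X \supset H_1 \supset H_1 \cap H_2 \supset \cdots \supset V$, or equivalently that $V$ itself is non-characteristic for the mixed Hodge modules $H^j f_+(W_{n+l}\shO_Y(*E))$ used to compute the $C_l$. Once this technical point is granted, the argument reduces cleanly to the isolated-singularity case already handled in \theoremref{main}.
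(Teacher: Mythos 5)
Your overall strategy coincides with the paper's: slice by $s$ general hypersurfaces so that $D_V$ acquires isolated singularities at general points of $Z$, transport the computation to $(V,D_V)$ via the multi-hypersurface version of \theoremref{thmrestriction}, and finish with \theoremref{main} applied to $(V,D_V)$ with $n-s$ in place of $n$. (A small point: the statement you invoke is the unnumbered remark \emph{following} \remarkref{remarkrestriction}, not \remarkref{remarkrestriction} itself, which only gives a one-sided containment for an arbitrary hypersurface.) The divergence is in how one passes from the ideals to the sheaves $C_l$. You insist on the full base-change isomorphism $C_l\otimes\shO_V\cong C_{l,V}$, which forces the $\shtor$-vanishing that you yourself flag as the unresolved ``main obstacle''. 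As written this is a genuine gap: you would still need to check that a general member of each linear system is non-characteristic for the modules $\gr^W_{n+l}\OX(*D)$ and that this persists along the chain of slices, identify $C_l$ with $\omega_X\otimes\gr^F_0$ of such a module so that $L^ji^*\gr^F_p=0$ for $j<0$ translates into $\shtor_1(C_l,\shO_H)=0$, and then run the diagram chase using \theoremref{thmrestriction} for both $l-1$ and $l$. This is plausible and probably fillable, but none of it is carried out.

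The paper shows that this entire step can be bypassed. Tensoring the defining sequence of \definitionref{differencesheaves} with $\shO_V$ is only right exact, and that already suffices: comparing it with the corresponding exact sequence for $(V,D_V)$, with the first two vertical maps supplied by the restriction theorem, the snake lemma gives a surjection $\tilde{C_l}\otimes\shO_V\onto\widetilde{C_{l,V}}$, i.e. $\tilde{C_l}\cdot\shO_V=\widetilde{C_{l,V}}$. No Tor computation enters. Since $V$ is general, it meets $Z$ transversely at points where $\tilde{C_l}$ agrees with its torsion-free quotient and is locally free over $Z$, so the length of $\tilde{C_l}\cdot\shO_V$ at such a point is the generic rank of $C_l\restr{Z}$; equating this with $\dim(\widetilde{C_{l,V}})_x$ as computed by \theoremref{main} gives the formula. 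So your reduction is sound in outline, but the one step you leave as a black box is both the only nontrivial one and, as the paper's argument demonstrates, unnecessary in the strong form you aim for.
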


\begin{proof} We denote by $C_{l,V}$ the sheaves defined in \definitionref{differencesheaves} for the pair $(V, D_V)$. By \theoremref{main}, the dimension of $C_{l,V}$ at $x$ is $h^{0,n-l-s}(H^{n-2-s}(G_{V,x}))$. Consider the diagram 
\begin{diagram*}{2.5em}{2em}
\matrix[math] (m) {  &  I_0^{W_{l-1}}(D)\otimes\shO_V & I_0^{W_{l}}(D)\otimes\shO_V  & \tilde{C_l}\otimes\shO_V & 0 \\
										0 &  I_0^{W_{l-1}}(D_V) & I_0^{W_{l}}(D_V) & \widetilde{C_{l,V}} & 0\\}; 
 \path[to] (m-1-2) edge (m-1-3);
 \path[to] (m-1-3) edge (m-1-4);
 \path[to] (m-1-4) edge (m-1-5);
 \path[to] (m-2-1) edge (m-2-2);
 \path[to] (m-2-2) edge (m-2-3);
 \path[to] (m-2-3) edge (m-2-4);
 \path[to] (m-2-4) edge (m-2-5);
 \path[to] (m-1-2) edge (m-2-2);
 \path[to] (m-1-3) edge (m-2-3);
 \path[to] (m-1-4) edge node[auto]{$\alpha$} (m-2-4);

\end{diagram*}
where the first two vertical arrows are given by \theoremref{thmrestriction}. By the Snake Lemma, the map $\alpha$ is a surjection. This map is induced by the multiplication map, hence $\tilde{C_l}\cdot\shO_V=\widetilde{C_{l,V}}$. Let $\tilde{C_l}\onto C'_l$ be the surjection onto the torsion-free part. As $V$ is the intersection of $s$ general hypersurfaces, $\tilde{C_l}\cdot\shO_V = C'_l\cdot\shO_V$, which is a skyscraper sheaf of length (or dimension) at a point $x$ as described above. This length coincides with the rank of $\tilde{C_l}$, and the latter has the same rank as $C_l$.\end{proof}

Using \theoremref{thmrestriction}, we also obtain information about the singularities of some hypersurfaces of $\PP^n$ with non-isolated singularities. Before stating the precise result, recall the information that can be obtained using the ideals $I_0(D)$ and $\adj(D)$. Let $D\subseteq \PP^n$ be a hypersurface of degree $d$. Let $Z_n$ and $Z_1$ be the schemes defined by the ideal $I_0(D)$ and $\adj(D)$, and let $z_n, z_1$ be their corresponding dimensions. If $z_n \leq n-d$, then the pair $(\PP^n, D)$ is log-canonical. Moreover, if $z_1\leq n-d-1$, then the pair $(\PP^n,D)$ has rational singularities. Indeed, we can assume $z_n=0$ using \theoremref{thmrestriction}, then the condition on the degree of $D$ is that $n-d\geq 0$. Using the exact sequence $$0\to \omega_{\PP^n}(D)\otimes I_0(D)\to \omega_{\PP^n}(D) \to \omega_{\PP^n}(D)\otimes\shO_{Z_n}\to 0,$$ the fact that $\omega_{\PP^n}(D) = \shO_{\PP^n}(-n-1+d)$, and Nadel vanishing, we obtain the result. The analogous result for $Z_1$ follows from a similar argument, replacing $\omega_{\PP^n}(D)$ in the sequence by $\omega_{\PP^n}(D+H)$, where $H\subseteq \PP^n$ is a degree 1 hypersurface, and applying \propositionref{kodairatypevanishing} instead of Nadel vanishing.\\

For the schemes $Z_l$ defined by the ideals $I_0^{W_l}(D)$, we obtain the same result than for $Z_n$.
\begin{proposition} Using the notation above, let $z_l=\dim{Z_l}$ and suppose $z_l\leq n-d$. Then $Z_l=\emptyset$. In particular, if $z_1 = n-d$, then $Z_2 = \emptyset$. Moreover, in this case, if $Z_1\neq\emptyset$, that is, if $D$ has non-rational singularties, then these singularities are \textit{compound} $(0, n-z_1-2)$ singularities. \end{proposition}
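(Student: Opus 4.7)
The plan is to imitate the two-step strategy used in the discussion preceding the proposition: cut $Z_l$ down to dimension zero by applying \theoremref{thmrestriction} (or its generalization in \remarkref{remarkrestriction}), and then force the resulting finite scheme to vanish by sandwiching it between the $H^0$ of a negatively-twisted line bundle (which vanishes from the degree hypothesis $z_l \leq n-d$) and an $H^1$ killed by the ample-divisor vanishing theorem. The argument is intended for $l \geq 2$, since the $l=1$ analogue (with the strictly stronger bound $z_1 \leq n-d-1$) was already handled in the paragraphs preceding the statement.

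Concretely, assume $l \geq 2$ and, for contradiction, that $Z_l \neq \emptyset$. Choose $z_l$ general hyperplanes $H_1, \ldots, H_{z_l} \subseteq \PP^n$ and set $V = \bigcap H_i \cong \PP^{n-z_l}$. By the generalized restriction statement, $I_0^{W_l}(D_V) = I_0^{W_l}(D)\cdot \shO_V$, so the scheme $Z_l(D_V)$ defined by $I_0^{W_l}(D_V)$ is precisely $Z_l \cap V$; the generic choice of the $H_i$ makes this a nonempty zero-dimensional subscheme of $V$, and at the same time ensures that $D_V$ has at most isolated singularities. Twisting the defining sequence
\[
0 \to I_0^{W_l}(D_V) \to \shO_V \to \shO_{Z_l(D_V)} \to 0
\]
by $\omega_V(D_V) \cong \shO_V(d + z_l - n - 1)$, the hypothesis $z_l \leq n-d$ makes this line bundle of degree $\leq -1$, so $H^0(V, \omega_V(D_V))=0$. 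Since $D_V$ is ample with isolated singularities on $V$, \theoremref{thmvanishingample} (whose single exceptional case $l=1$ we have excluded) gives $H^1(V, \omega_V(D_V)\otimes I_0^{W_l}(D_V)) = 0$. The long exact sequence in cohomology then forces $H^0(V, \omega_V(D_V)\otimes \shO_{Z_l(D_V)}) = 0$, contradicting $Z_l(D_V) \neq \emptyset$.

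For the ``in particular'' clause, the chain (\ref{chain}) gives $Z_2 \subseteq Z_1$, so $\dim Z_2 \leq z_1 = n-d$ and the first part with $l=2$ yields $Z_2 = \emptyset$. For the last sentence, cut by a general linear subspace $V$ of codimension $z_1$; then $D_V$ has only isolated singularities, those in $Z_1 \cap V$ are non-rational, and for every $k \geq 2$ we have $Z_k(D_V) \subseteq Z_2(D_V) = \emptyset$. In particular $I_0(D_V) = \shO_V$, so the pair $(V, D_V)$ is log-canonical, and Ishii's dichotomy recalled in the Introduction applies: at each such slice point $x$ exactly one $\dim (C_k)_x$ equals $1$, and since $(C_k)_x$ vanishes for all $k \geq 3$, this forces $\dim (C_2)_x = 1$. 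Hence the slice singularity is of type $(0, (n-z_1) - 2)$, which is the meaning of being compound of type $(0, n - z_1 - 2)$.

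The main obstacle is verifying that a single generic choice of $V$ can fulfil all of its assigned roles at once: it must be non-characteristic for the relevant mixed Hodge module so that the restriction formula $I_0^{W_l}(D_V) = I_0^{W_l}(D)\cdot \shO_V$ holds (as used in the proof of \theoremref{thmrestriction}); it must be transverse to every stratum of $D_{\rm sing}$ so that $D_V$ has only isolated singularities; and, in the final step, it must meet $Z_1$ at points where the generic analytic type of the singularity is preserved, so that Ishii's classification genuinely describes the slice singularity. Each of these is an open condition on the appropriate Grassmannian, so a common generic $V$ exists, but this is the one genuinely non-formal point of the argument.
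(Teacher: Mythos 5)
Your proof is correct and follows essentially the same route the paper sketches: cut down to a zero-dimensional scheme via \theoremref{thmrestriction}, note that $\omega_V(D_V)=\shO_V(d+z_l-n-1)$ has no sections under the hypothesis $z_l\leq n-d$, kill the $H^1$ with \theoremref{thmvanishingample} (valid since $l\geq 2$), and for the last clause combine $Z_2(D_V)=\emptyset$ with Ishii's result to pin down the transversal type as $(0,\dim V-2)$. The paper leaves all of this implicit ("follows from the same sequence as above"), and your write-up fills in the details accurately, including the correct observation that the statement must be read as excluding $l=1$ from the first clause.
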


The first part of the statement follows from the same sequence as above and \theoremref{thmvanishingample}. In the second part, by \textit{compound} $(0,n-z_1-2)$ singularities, we mean that after intersecting with a linear subspace $L$ of dimension $n-z_1$ (given by the intersection of $z_1$ general hyperplane sections), $D\restr{L} =:D_L$ has only isolated log-canonical singularities, and these singularities are of type $(0, n-z_1-2) = (0, \dim{L} -2)$. For instance, if $n=4$, $d=3$ and $z_1=1$, that is, $D$ is a hypersurface of $\PP^4$ of degree 3 with a 1-dimensional non-rational singular locus, then the transversal singularity type is simple elliptic. The second statement follows from applying \theoremref{thmrestriction}.

\bibliography{../../bib}

\end{document}